\begin{document}

\newtheorem{theorem}{Theorem}
\newtheorem{lemma}[theorem]{Lemma}
\newtheorem{claim}[theorem]{Claim}
\newtheorem{cor}[theorem]{Corollary}
\newtheorem{prop}[theorem]{Proposition}
\newtheorem{definition}{Definition}
\newtheorem{question}[theorem]{Question}
\newcommand{\hh}{{{\mathrm h}}}

\numberwithin{equation}{section}
\numberwithin{theorem}{section}
\numberwithin{table}{section}

\def\sssum{\mathop{\sum\!\sum\!\sum}}
\def\ssum{\mathop{\sum\ldots \sum}}
\def\iint{\mathop{\int\ldots \int}}

\def\squareforqed{\hbox{\rlap{$\sqcap$}$\sqcup$}}
\def\qed{\ifmmode\squareforqed\else{\unskip\nobreak\hfil
\penalty50\hskip1em\null\nobreak\hfil\squareforqed
\parfillskip=0pt\finalhyphendemerits=0\endgraf}\fi}

\newfont{\teneufm}{eufm10}
\newfont{\seveneufm}{eufm7}
\newfont{\fiveeufm}{eufm5}
%
%
\newfam\eufmfam
     \textfont\eufmfam=\teneufm
\scriptfont\eufmfam=\seveneufm
     \scriptscriptfont\eufmfam=\fiveeufm
%
%
\def\frak#1{{\fam\eufmfam\relax#1}}

\newcommand{\bflambda}{{\boldsymbol{\lambda}}}
\newcommand{\bfmu}{{\boldsymbol{\mu}}}
\newcommand{\bfxi}{{\boldsymbol{\xi}}}
\newcommand{\bfrho}{{\boldsymbol{\rho}}}

\def\fK{\mathfrak K}
\def\fT{\mathfrak{T}}

\def\fA{{\mathfrak A}}
\def\fB{{\mathfrak B}}
\def\fC{{\mathfrak C}}

\def \balpha{\bm{\alpha}}
\def \bbeta{\bm{\beta}}
\def \bgamma{\bm{\gamma}}
\def \blambda{\bm{\lambda}}
\def \bchi{\bm{\chi}}
\def \bphi{\bm{\varphi}}
\def \bpsi{\bm{\psi}}

\def\eqref#1{(\ref{#1})}

\def\vec#1{\mathbf{#1}}


\def\cA{{\mathcal A}}
\def\cB{{\mathcal B}}
\def\cC{{\mathcal C}}
\def\cD{{\mathcal D}}
\def\cE{{\mathcal E}}
\def\cF{{\mathcal F}}
\def\cG{{\mathcal G}}
\def\cH{{\mathcal H}}
\def\cI{{\mathcal I}}
\def\cJ{{\mathcal J}}
\def\cK{{\mathcal K}}
\def\cL{{\mathcal L}}
\def\cM{{\mathcal M}}
\def\cN{{\mathcal N}}
\def\cO{{\mathcal O}}
\def\cP{{\mathcal P}}
\def\cQ{{\mathcal Q}}
\def\cR{{\mathcal R}}
\def\cS{{\mathcal S}}
\def\cT{{\mathcal T}}
\def\cU{{\mathcal U}}
\def\cV{{\mathcal V}}
\def\cW{{\mathcal W}}
\def\cX{{\mathcal X}}
\def\cY{{\mathcal Y}}
\def\cZ{{\mathcal Z}}
\newcommand{\rmod}[1]{\: \mbox{mod} \: #1}

\def\cg{{\mathcal g}}

\def\vr{\mathbf r}

\def\e{{\mathbf{\,e}}}
\def\ep{{\mathbf{\,e}}_p}
\def\eq{{\mathbf{\,e}}_q}

\def\em{{\mathbf{\,e}}_m}

\def\Tr{{\mathrm{Tr}}}
\def\Nm{{\mathrm{Nm}}}

 \def\SS{{\mathbf{S}}}

\def\lcm{{\mathrm{lcm}}}

\def\({\left(}
\def\){\right)}
\def\l|{\left|}
\def\r|{\right|}
\def\fl#1{\left\lfloor#1\right\rfloor}
\def\rf#1{\left\lceil#1\right\rceil}
\def\flq#1{\langle #1 \rangle_q}
\def\flp#1{\langle #1 \rangle_p}

\def\mand{\qquad \mbox{and} \qquad}

\newcommand{\commK}[1]{\marginpar{%
\begin{color}{red}
\vskip-\baselineskip 
\raggedright\footnotesize
\itshape\hrule \smallskip K: #1\par\smallskip\hrule\end{color}}}

\newcommand{\commI}[1]{\marginpar{%
\begin{color}{magenta}
\vskip-\baselineskip 
\raggedright\footnotesize
\itshape\hrule \smallskip I: #1\par\smallskip\hrule\end{color}}}

\newcommand{\commT}[1]{\marginpar{%
\begin{color}{blue}
\vskip-\baselineskip 
\raggedright\footnotesize
\itshape\hrule \smallskip T: #1\par\smallskip\hrule\end{color}}}




\hyphenation{re-pub-lished}

\mathsurround=1pt

\def\bfdefault{b}
\overfullrule=5pt

\def \F{{\mathbb F}}
\def \K{{\mathbb K}}
\def \Z{{\mathbb Z}}
\def \Q{{\mathbb Q}}
\def \R{{\mathbb R}}
\def \C{{\\mathbb C}}
\def\Fp{\F_p}
\def \fp{\Fp^*}

\def\Smn{S_{k,\ell,q}(m,n)}

\def\Kmn{\cK_p(m,n)}
\def\psmn{\psi_p(m,n)}

\def\SM{\cS_{k,\ell,q}(\cM)}
\def\SMN{\cS_{k,\ell,q}(\cM,\cN)}
\def\SAMN{\cS_{k,\ell,q}(\cA;\cM,\cN)}
\def\SABMN{\cS_{k,\ell,q}(\cA,\cB;\cM,\cN)}

\def\SIJq{\cS_{k,\ell,q}(\cI,\cJ)}
\def\SAJq{\cS_{k,\ell,q}(\cA;\cJ)}
\def\SABJq{\cS_{k,\ell,q}(\cA, \cB;\cJ)}

\def\sM{\cS_{k,q}^*(\cM)}
\def\sMN{\cS_{k,q}^*(\cM,\cN)}
\def\sAMN{\cS_{k,q}^*(\cA;\cM,\cN)}
\def\sABMN{\cS_{k,q}^*(\cA,\cB;\cM,\cN)}

\def\sIJq{\cS_{k,q}^*(\cI,\cJ)}
\def\sAJq{\cS_{k,q}^*(\cA;\cJ)}
\def\sABJq{\cS_{k,q}^*(\cA, \cB;\cJ)}
\def\sABJp{\cS_{k,p}^*(\cA, \cB;\cJ)}

 \def \xbar{\overline x}

\title[Exponential Sums with Binomials]{Bilinear Forms with Exponential Sums with Binomials}

 \author[K. Liu] {Kui Liu}
\address{School of Mathematics and  Statistics, Qingdao University, No.308, Ningxia Road, Shinan, Qingdao, Shandong, 266071, P. R. China}
\email{liukui@qdu.edu.cn}

 \author[I. E. Shparlinski] {Igor E. Shparlinski}

\address{Department of Pure Mathematics, University of New South Wales,
Sydney, NSW 2052, Australia}
\email{igor.shparlinski@unsw.edu.au}

 \author[T. P. Zhang] {Tianping Zhang}

\thanks{T. P. Zhang is the corresponding author (tpzhang@snnu.edu.cn).}

\address{School of Mathematics and Information Science, Shaanxi Normal University, Xi'an 710019 Shaanxi, P. R. China}
\email{tpzhang@snnu.edu.cn}

\begin{abstract} We obtain several  estimates for bilinear form with  exponential  sums
with binomials $mx^k + nx^\ell$. In particular we show the existence of nontrivial cancellations between
such sums when the coefficients $m$ and $n$ vary over rather sparse sets of general nature.
 \end{abstract}

\keywords{Binomial sums, cancellation, bilinear form}
\subjclass[2010]{11D79, 11L07}

\maketitle

\section{Introduction}
\label{sec:intro}

\subsection{Background and motivation}

For    a positive
integer $q$, we denote  by $\Z_q$ the residue ring modulo $q$  and also denote by $\Z_q^*$ the group
of units of $\Z_q$.

For fixed integers $k$ and $\ell$, we consider exponential sums with binomials
$$
\Smn = \sum_{x\in \Z_q^*} \eq\(mx^k +nx^\ell\),
$$
where for negative $k$ or $\ell$  the  inversion of $x$ is computed  modulo $q$ and
$$
\eq(z) = \exp(2 \pi i z/q).
$$
The case $(k,\ell) = (1,-1)$ corresponds to the case of Kloosterman sums.

Furthermore, given two sets $\cM,\cN \subseteq \Z_q$
and  two sequences of weights $\cA = \{\alpha_m\}_{m\in \cM}$ and $\cB = \{\beta_n\}_{n\in \cN}$, we define the bilinear sums of  binomial sums
$$
\SABMN = \sum_{m\in \cM} \sum_{n \in \cN} \alpha_m \beta_n \Smn .
$$
We also consider the following special cases
 \begin{equation}
 \label{eq:Special SAMN}
\begin{split}
 \SAMN&= \cS_{k,\ell,q}\(\cA, \{1\}_{n\in \cN};\cM,\cN\)\\
 & = \sum_{m\in \cM} \sum_{n \in \cN} \alpha_m\Smn,
 \end{split}
\end{equation}
 and
  \begin{equation}
 \label{eq:Special SMN}
\begin{split}
\SMN&= \cS_{k,\ell,q}\(\{1\}_{m\in \cM}, \{1\}_{n\in \cN} ;\cM,\cN\)\\
&= \sum_{m\in \cM} \sum_{n \in \cN} \Smn.
\end{split}
\end{equation}

For $(k,\ell) = (1,-1)$, that is, for Kloosterman sums, such bilinear forms have been introduced
by Fouvry,   Kowalski and Michel~\cite{FKM} who have also demonstrated the importance
of estimating them beyond of what follows immediately from the Weil bound, see~\cite[Chapter~11]{IwKow},
which is essentially given by~\eqref{eq:trivial} below.

More generally, for arbitrary modulus $q$ and exponents $(k,\ell)$ one can apply the
general bound of~\cite[Theorem~1]{Shp1} on exponential sums with
few nomials to derive
 \begin{equation}
\label{eq:trivial}
\left| \SABMN  \right| \le  MN  q^{1/2+o(1)}\max_{m\in \cM}  |\alpha_m|  \max_{n \in \cN} | \beta_n |,
\end{equation}
to which we refer as the {\it trivial bound\/}.

Further progress in the case $(k,\ell) = (1,-1)$ has been
achieved in~\cite{BFKMM1,BFKMM2,KMS,Shp2,ShpZha}. In~\cite{Xi-FKM}  this question has been studied
on average over the moduli $q$.
We also recall recent results of~\cite{Khan,LSZ,WuXi} when cancellations among
Kloosterman sums are studied for moduli of special arithmetic structure.
Furthermore, in
the case of a prime $q=p$ and  $(k,\ell) = (2,-1)$ has been studied by Nunes~\cite{Nun1},
via the method of Fouvry,   Kowalski and Michel~\cite{FKM}. Then these sums have been
used to investigate the distribution of squarefree integers in arithmetic progressions;
see Section~\ref{sec:comp} for exact formulations of the results of Nunes~\cite{Nun1}
and their comparison with our bounds.

We remark that the method introduced by  Fouvry,   Kowalski and Michel~\cite{FKM},
and then further developed and  used in~\cite{BFKMM1,KMS,Nun1},
relies heavily on  such deep tools as the Weil and Deligne bounds, see~\cite[Chapter~11]{IwKow}.
In particular, this approach works well only for prime moduli $p$.
It is important to note the methods of~\cite{Shp2,ShpZha} are of elementary nature, and
in particular work without any losses of strength for composite $q$ as well. On the other hand, the method
of~\cite{BFKMM1,FKM,KMS,Xi-FKM}
works for much more general objects than Kloosterman and other similar exponential  sums.

\subsection{General notation}

We remark that our bounds involve  only  the norms of the weights $\cA$
but do not explicitly depend on the size of the set $\cM$ on which they are supported.
Hence, without loss of generality, we can assume that $\cM = \Z_q$.  On the other hand,
our method does not apply to general sets $\cN$ and works only when $\cN$ is an interval,
and thus, for the sums with weights we simplify the notation as
 \begin{equation}
  \label{eq:Simple SABJ}
\SABJq = \sum_{m\in  \Z_q} \sum_{n \in \cJ} \alpha_m  \beta_n\Smn,
\end{equation}
and even further as
 \begin{equation}
\label{eq:Simple SAJ}
\SAJq = \sum_{m\in  \Z_q} \sum_{n \in \cJ} \alpha_m \Smn,\\
\end{equation}
where $\cJ = \{L+1, \ldots, L+N\} \subseteq \Z_q$ is a set of $N$ consecutive residues
of $\Z_q$ (with $q-1$ followed by $0$).  Furthermore,  in the case of the sums without weights
we only estimate such sums when the set $\cM = \cI  =  \{K+1,\ldots, K+M\} \subseteq \Z_q$ is another interval, and thus
we write  $\SIJq$.

The case of $\ell = -1$ is somewhat special as it admist some extra treatment and
is also important for many applications, see~\cite{Nun1} for example. Thus we introduce special
notation
 \begin{equation}
\label{eq:Simple l=-1}
\begin{split}
& \sABJq = \cS_{k,-1,q}(\cA, \cB;\cJ), \\
&\sAJq = \cS_{k,-1,q}(\cA;\cJ), \\
& \sIJq = \cS_{k,-1,q}(\cI,\cJ).
\end{split}
\end{equation}

For an integer $u$ we define
$$
\flq{u} = \min_{k \in \Z} |u - kq|
$$
as the distance to the closest integer,  which is a multiple of $q$.

We also define the norms
 $$
 \|\cA\|_\infty=\max_{m\in \cM}|\alpha_m|  \mand \|\cA\|_\sigma =\( \sum_{m\in \cM} |\alpha_m|^\sigma\)^{1/\sigma},
 $$
where  $\sigma >0$, and similarly for the weights $\cB$.

Throughout the paper,  as usual $A\ll B$  is equivalent to the inequality $|A|\le cB$
with some  constant $c>0$, which may depend on the integers $k$ and $\ell$, and occasionally, where obvious,
the real parameter $\varepsilon>0$ and on the integer parameter $\nu \ge 1$.

The letter $p$ always denotes a prime number and we say that $q$ is {\it squarefree\/} if
it not divisible by $p^2$ for any $p$.

\section{New results}

\subsection{Bounds for every $q$}
\label{eq:every q}

We start with the sums $\SAJq$, defined in~\eqref{eq:Simple SAJ}, which are medium level of complexity as
one variable still runs through a continuous interval.
The proof is based on the method from~\cite{ShpZha}, coupled with a result of
Pierce~\cite[Theorem~4]{Pierce}

\begin{theorem}
\label{thm:SAJq}     If $k\ne \ell$ are  fixed nonzero integers with
$\gcd(k,\ell)=1$, then, for any fixed positive integer $\nu$, squarefree $q\ge 1$ and
$$
 \cJ = \{L+1, \ldots, L+N\} \subseteq \Z_q,
$$
we have
\begin{align*}
&\SAJq \\
& \qquad \ll   \min\Bigl\{ \|\cA\|_2  N^{1/2} q,\,  \|\cA\|_{1}^{1-1/\nu} \|\cA \|_{2}^{1/\nu}\\
& \qquad \qquad \qquad \qquad \qquad \qquad \quad\(q+q^{(2\nu^2 + \nu +1)/2 \nu (\nu+1)}
  N^{1/(\nu+1)}\)q^{o(1)} \Bigr\}.
\end{align*}
\end{theorem}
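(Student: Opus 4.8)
The plan is to peel the weights $\cA$ off the analytic core by Hölder's inequality, reducing everything to a single high moment of the one-variable sums, and then to estimate that moment through a count of solutions of a congruence between $k$-th powers, which is exactly where Pierce's result enters. Write
\[
T(m)=\sum_{n\in\cJ}\sum_{x\in\Z_q^*}\eq\(mx^k+nx^\ell\)=\sum_{x\in\Z_q^*}\eq\(mx^k\)g(x),\qquad g(x)=\sum_{n\in\cJ}\eq\(nx^\ell\),
\]
so that $\SAJq=\sum_m\alpha_m T(m)$. Applying Hölder to the $m$-sum with the three exponents $\nu/(\nu-1)$, $2\nu$, $2\nu$ to the splitting $|\alpha_m|\,|T(m)|=|\alpha_m|^{1-1/\nu}\cdot|\alpha_m|^{1/\nu}\cdot|T(m)|$ gives
\[
\SAJq\ll \|\cA\|_1^{1-1/\nu}\,\|\cA\|_2^{1/\nu}\,W^{1/(2\nu)},\qquad W=\sum_{m\in\Z_q}|T(m)|^{2\nu}.
\]
(For $\nu=1$ this degenerates to the Cauchy--Schwarz bound $\SAJq\ll\|\cA\|_2\,W^{1/2}$, which will yield the first term of the minimum.) Thus it remains to prove $W\le q^{o(1)}\bigl(q+q^{\theta}N^{1/(\nu+1)}\bigr)^{2\nu}$ with $\theta=(2\nu^2+\nu+1)/(2\nu(\nu+1))$.

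Expanding the $2\nu$-th power and summing over $m$ by orthogonality collapses the $m$-sum to a congruence, giving
\[
W=q\sum_{\substack{\vec x,\vec y\in(\Z_q^*)^\nu\\ \sum_i x_i^k\equiv\sum_j y_j^k}}\ \prod_{i}g(x_i)\prod_{j}\overline{g(y_j)}.
\]
I would first isolate the $m=0$ term of $W$. Since the number of $x$ with a prescribed value of $x^\ell$ is $q^{o(1)}$, while the $L^1$-norm of the Dirichlet kernel $\sum_{n\in\cJ}\eq(n\,\cdot\,)$ is $q^{1+o(1)}$, one has $|T(0)|\le\sum_x|g(x)|\le q^{1+o(1)}$, so the $m=0$ term contributes $\le q^{2\nu+o(1)}$ to $W$; this is the source of the term $q$ in the final bound.

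The main obstacle is the remaining, genuinely off-diagonal, contribution $\sum_{m\ne0}|T(m)|^{2\nu}$, equivalently the off-diagonal part of the weighted count above. Here the hypothesis $\gcd(k,\ell)=1$ is decisive: the map $x\mapsto(x^k,x^\ell)$ is injective on $\Z_q^*$, since $x=(x^k)^a(x^\ell)^b$ whenever $ak+b\ell=1$, so a value of $x^\ell$ together with the companion $k$-th power pins down $x$. This lets me reparametrise by the quantities $v_i=x_i^\ell$, which the weight forces to be small through $|g(x)|\ll\min\{N,\,q/\flq{x^\ell}\}$ and a dyadic decomposition according to the size of $\flq{x^\ell}$. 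After this reduction the inner quantity becomes the number of solutions of the $k$-th power congruence $\sum_i x_i^k\equiv\sum_j y_j^k\pmod q$ in which the accompanying $\ell$-th powers are localised to short intervals about $0$, and it is precisely such a count that Pierce's Theorem~4 of~\cite{Pierce} controls. Feeding Pierce's estimate into the dyadic sum, balancing the ranges $\flq{x^\ell}\le q/N$ and $\flq{x^\ell}> q/N$, and absorbing the root multiplicities $\prod_{p\mid q}\gcd(k,p-1)\gcd(\ell,p-1)=q^{o(1)}$, should yield an off-diagonal contribution to $W$ of size $q^{2\nu-1}\bigl(q^{2}N^{2\nu}\bigr)^{1/(\nu+1)}q^{o(1)}$, whose $2\nu$-th root is exactly $q^{\theta}N^{1/(\nu+1)}$. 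I expect the delicate points to be matching the localisation produced by $g$ to the precise hypotheses of Pierce's theorem and optimising the dyadic split; this is the step where the method of~\cite{ShpZha} must be adapted from $\ell=-1$ to a general $\ell$ coprime to $k$.

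Finally, for the first term of the minimum I take $\nu=1$ and bound $W=\sum_m|T(m)|^2$ directly: orthogonality in $m$ gives $W=q\sum_{x^k\equiv y^k}g(x)\overline{g(y)}\ll q^{1+o(1)}\sum_{x}|g(x)|^2$, and by Parseval (which gives $\sum_{t\in\Z_q}|\sum_{n\in\cJ}\eq(nt)|^2=qN$) together with the $q^{o(1)}$ bound for the number of $\ell$-th roots we obtain $\sum_x|g(x)|^2\ll Nq^{1+o(1)}$, whence $\SAJq\ll\|\cA\|_2\,N^{1/2}q^{1+o(1)}$. Taking the smaller of the two estimates then completes the proof.
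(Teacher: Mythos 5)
Your skeleton is the same as the paper's: H\"older with exponents $\nu/(\nu-1),2\nu,2\nu$ to peel off $\|\cA\|_1^{1-1/\nu}\|\cA\|_2^{1/\nu}$, orthogonality to turn the $2\nu$-th moment into a congruence count, the bound $|g(x)|\ll\min\{N,q/\flq{x^\ell}\}$ with dyadic localisation, the substitution $u_i=\flq{x_i^\ell}$, $v_i=x_i^k$ (legitimised by $\gcd(k,\ell)=1$), and Pierce's count, i.e.\ Lemma~\ref{lem:Ikl}. Your proof of the first bound is correct and in fact cleaner than the paper's: you use Parseval plus Lemma~\ref{lem:Monom} to get $\sum_{x}|g(x)|^2\le q^{1+o(1)}N$, where the paper instead runs the dyadic decomposition through the bilinear bound of Lemma~\ref{lem:DoubleSum}.

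The gap sits exactly where you flag it, and it is a real one. Because you apply H\"older \emph{before} localising, your moment $W=\sum_{m}|T(m)|^{2\nu}$ expands into a congruence count over $2\nu$ independent variables $x_1,\ldots,x_{2\nu}\in\Z_q^*$, so any dyadic decomposition performed at that stage is necessarily per variable, producing tuples with mixed ranges $\flq{x_j^\ell}\sim e^{i_j}q/N$. Lemma~\ref{lem:Ikl}, however, counts solutions with all auxiliary variables below one \emph{common} threshold $U$. If you apply it with $U=e^{i_{\max}}q/N$ against the weight $\prod_j Ne^{-i_j}$, the first term of Pierce's bound contributes $q^{2\nu}e^{2\nu i_{\max}-\sum_j i_j}$, which for a tuple with one variable in the top range and the rest in the bottom range is of size $N^{2\nu-1}q^{2\nu}$ rather than $q^{2\nu}$; this destroys the bound. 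The cure is to localise \emph{before} expanding: split $T(m)=\sum_{i}T_i(m)$ over the $O(\log q)$ ranges and use $\|T\|_{2\nu}\le\sum_i\|T_i\|_{2\nu}$ (equivalently $|T|^{2\nu}\ll(\log q)^{2\nu-1}\sum_i|T_i|^{2\nu}$), so that inside each moment all $2\nu$ variables share the single range $U=e^iq/N$ and Lemma~\ref{lem:Ikl} applies as stated, the factors $e^{-i}$ then summing to a convergent series. This is precisely how the paper organises the argument: it decomposes $\SAJq$ into $\Sigma_0,\Sigma_1,\ldots$ first and applies H\"older to each piece separately. With this reordering your computation does yield the stated exponent $(2\nu^2+\nu+1)/2\nu(\nu+1)$. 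Incidentally, your separation of the $m=0$ term is harmless but unnecessary: the term $q$ arises from the $U^{2\nu}q^{-1}$ part of Pierce's bound whether or not $m=0$ is excluded.
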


In particular, when $\alpha_m$ is the characteristic function of the interval
$\cI = \{K+1,\ldots, K+M\}$ we obatin a bound on the sums
 $\SIJq$, defined by~\eqref{eq:Special SMN}. We also see that in the case
 of the sums $ \SIJq $ the roles of $M$ and $N$ can be interchanged.

 \begin{cor}
  \label{cor:SIJq}    If $k\ne \ell$ are  fixed nonzero integers with $\gcd(k,\ell)=1$,
then, for any  fixed positive integer $\nu$,  squarefree $q\ge 1$ and
$$
\cI = \{K+1,\ldots, K+M\},\ \cJ = \{L+1, \ldots, L+N\} \subseteq \Z_q,
$$
we have
$$
 \SIJq  \le X^{1-1/2\nu}  \(q+ q^{(2\nu^2 + \nu +1)/2 \nu (\nu+1)} Y^{1/(\nu+1)} \)
q^{o(1)}
$$
for any choice of $X,Y$ with $\{X,Y\} = \{M,N\}$.
\end{cor}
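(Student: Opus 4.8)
The plan is to derive the corollary from Theorem~\ref{thm:SAJq} by two applications, in each case taking the weight sequence $\cA$ to be the characteristic function of an interval and then invoking the evident symmetry of the binomial sums in their two coefficients. First I would take $\alpha_m$ to be the indicator of $\cI = \{K+1,\ldots,K+M\}$, so that the weighted sum $\SAJq$ collapses to the unweighted sum $\SIJq$. For an indicator sequence on a set of size $M$ one has $\|\cA\|_1 = M$ and $\|\cA\|_2 = M^{1/2}$, and therefore
$$
\|\cA\|_1^{1-1/\nu}\,\|\cA\|_2^{1/\nu} = M^{1-1/\nu}\(M^{1/2}\)^{1/\nu} = M^{1-1/(2\nu)},
$$
since $1-1/\nu+1/(2\nu) = 1-1/(2\nu)$. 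Substituting this into the second term of the minimum in Theorem~\ref{thm:SAJq}, with $N$ the length of $\cJ$, gives exactly the asserted bound in the case $(X,Y)=(M,N)$.

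To obtain the complementary case $(X,Y)=(N,M)$, I would use that simply relabelling the summation in the definition yields
$$
S_{k,\ell,q}(m,n) = S_{\ell,k,q}(n,m),
$$
and hence, after interchanging the two outer sums, $\cS_{k,\ell,q}(\cI,\cJ) = \cS_{\ell,k,q}(\cJ,\cI)$. The hypotheses of Theorem~\ref{thm:SAJq}, namely $k\ne\ell$, both nonzero, and $\gcd(k,\ell)=1$, are all symmetric in $k$ and $\ell$, so the theorem applies verbatim with the ordered pair $(\ell,k)$ in place of $(k,\ell)$. Running the first argument again for $\cS_{\ell,k,q}(\cJ,\cI)$, now with $\cA$ the indicator of $\cJ$ (so $\|\cA\|_1=N$, $\|\cA\|_2=N^{1/2}$) and with $\cI$ serving as the interval of summation of length $M$, produces the same estimate but with $M$ and $N$ interchanged. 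This settles both assignments $\{X,Y\}=\{M,N\}$.

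Because the corollary is a direct specialisation rather than a genuinely new estimate, I do not expect any substantial obstacle. The only points meriting care are the exponent bookkeeping that collapses $\|\cA\|_1^{1-1/\nu}\|\cA\|_2^{1/\nu}$ to the clean power $M^{1-1/(2\nu)}$ for an indicator weight, and the verification that the coefficient swap used to exchange the roles of $M$ and $N$ preserves every arithmetic hypothesis of the theorem; both are immediate from the symmetry of the conditions in $k$ and $\ell$.
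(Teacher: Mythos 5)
Your proposal is correct and is essentially the paper's own argument: the paper derives the corollary precisely by taking $\cA$ to be the characteristic function of $\cI$ in Theorem~\ref{thm:SAJq} (so that $\|\cA\|_1^{1-1/\nu}\|\cA\|_2^{1/\nu} = M^{1-1/2\nu}$) and then noting that the symmetry $S_{k,\ell,q}(m,n) = S_{\ell,k,q}(n,m)$, together with the symmetry of the hypotheses in $k$ and $\ell$, allows the roles of $M$ and $N$ to be interchanged. Your exponent bookkeeping and the verification of the swapped hypotheses are both accurate, so there is nothing to add.
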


In particular, with $\nu =2$ we obtain from Corollary~\ref{cor:SIJq} that
 \begin{equation}
\label{eq:Simple Bound}
 \SIJq \le q^{1+o(1)}N^{3/4} + q^{11/12+o(1)}M^{1/3} N^{3/4} .
\end{equation}
This improves the trivial bound~\eqref{eq:trivial} provided that
 \begin{equation}
\label{eq:Simple Bound Range}
M^4N \ge q^{2+\varepsilon}  \mand M^{8} N^{3} \ge q^{5+\varepsilon}
\end{equation}
for some fixed $\varepsilon > 0$, and in particular for   $M=N \ge q^{5/11+\varepsilon}$.
Note that in~\eqref{eq:Simple Bound} and~\eqref{eq:Simple Bound Range} the roles of
$M$ and $N$ can be interchanged.

In the case $M,N =  q^{1/2+o(1)}$ crucial for many applications, Corollary~\ref{cor:SIJq}
implies the bound $MNq^{1/2-1/24+o(1)}$, saving $q^{1/24}$ compared to the trivial
bound~\eqref{eq:trivial}.

Finally, we estimate the sum $\sABJq$, see~\eqref{eq:Simple SABJ} and~\eqref{eq:Simple l=-1}, which is the most complicated case that requires some extra arguments combined with the
ideas of~\cite{Shp2,ShpZha}.

\begin{theorem}
\label{thm:sABJp}     If $k>1$ is a fixed integer,
then, for any  fixed positive integer $\nu$, prime $p\ge 1$ and
$$
 \cJ = \{L+1, \ldots, L+N\} \subseteq \Z_p,
$$
we have
$$
 \sABJp  \le   \sqrt{\|\cA\|_2\|\cB\|_2}\(p^{(6\nu -1)/4\nu}+p^{(3\nu+2)/2(\nu+1)}N^{1/2(\nu+1)}\) p^{o(1)}.
$$
\end{theorem}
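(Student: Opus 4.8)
Note first that the claimed inequality is not homogeneous in the weights (the right-hand side scales like the square root of each weight, whereas $\sABJp$ is linear in each), so it must be read under the normalisation $|\alpha_m|\le 1$ and $|\beta_n|\le 1$, which I assume throughout. The plan is then to prove two estimates of completely different character and to combine them by a geometric mean; it is this last step that manufactures the factor $\sqrt{\|\cA\|_2\|\cB\|_2}$. Using $\ell=-1$ I separate the two exponentials: since
\[
\Smn=\sum_{x\in\Z_p^*}\ep\left(mx^k+nx^{-1}\right),
\]
interchanging summation gives $\sABJp=\sum_{x\in\Z_p^*}A(x^k)B(x^{-1})$, where $A(t)=\sum_{m\in\Z_p}\alpha_m\ep(mt)$ and $B(u)=\sum_{n\in\cJ}\beta_n\ep(nu)$, and $x^{-1}$ is the inverse of $x$ modulo $p$.

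The first estimate is elementary. Applying Cauchy--Schwarz in $x$ and then Parseval, together with the fact that $x\mapsto x^k$ is $d$-to-one onto the group of $k$-th powers with $d=\gcd(k,p-1)\le|k|=O(1)$, I get
\[
\sum_{x\in\Z_p^*}|A(x^k)|^2\le d\sum_{t\in\Z_p}|A(t)|^2=dp\,\|\cA\|_2^2,\qquad \sum_{x\in\Z_p^*}|B(x^{-1})|^2\le p\,\|\cB\|_2^2,
\]
and hence $|\sABJp|\le\sqrt d\,p\,\|\cA\|_2\|\cB\|_2$. This bound is sensitive to the sparsity of the supports but on its own extracts no cancellation.

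The second estimate is the heart of the argument, and I expect it to be the main obstacle. Here I would rerun the method behind Theorem~\ref{thm:SAJq} for the present sum, now carrying the bounded weight $\beta_n$ through the completion of the interval sum $B(x^{-1})$: apply Hölder in $x$ with parameter $\nu$ to isolate the factor $A(x^k)$, expand the complementary power of $B(x^{-1})$, use orthogonality to pass to a count of solutions of congruences in the resulting $x$-variables subject to the $x_i^k$-constraints, estimate the incomplete sums over $\cJ$ by completion (where $|\beta_n|\le1$ keeps the $\ell^1$- and $\ell^2$-contributions no larger than in the unweighted case), and feed the point count through Pierce's bound \cite[Theorem~4]{Pierce} exactly as for Theorem~\ref{thm:SAJq}. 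Inserting $\|\cA\|_1\le p$ and $\|\cA\|_2\le p^{1/2}$ and optimising the completion length gives
\[
|\sABJp|\le\left(p^{2-1/(2\nu)}+p^{2-1/(\nu+1)}N^{1/(\nu+1)}\right)p^{o(1)}.
\]
The difficulty is to control the mixed sums $\sum_x\ep(ax^k+bx^{-1})$ and the attendant solution count uniformly; this is where the cyclic structure of $\Z_p^*$ (a clean description of the $k$-th roots of unity) and hence primality enter, and where one must check that the extra weight $\beta_n$ is genuinely harmless.

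Finally I combine the two bounds by the trivial identity $|\sABJp|=\sqrt{|\sABJp|\cdot|\sABJp|}$, i.e. by taking the geometric mean of the two estimates above. Writing the second bound as $p^{2-1/(2\nu)}+p^{2-1/(\nu+1)}N^{1/(\nu+1)}$ and using $\sqrt{c(b_1+b_2)}\le\sqrt{cb_1}+\sqrt{cb_2}$ with $c=\sqrt d\,p\,\|\cA\|_2\|\cB\|_2$, the first summand produces $\sqrt{\|\cA\|_2\|\cB\|_2}\,p^{(6\nu-1)/4\nu}$ and the second produces $\sqrt{\|\cA\|_2\|\cB\|_2}\,p^{(3\nu+2)/2(\nu+1)}N^{1/2(\nu+1)}$, the harmless constant $d^{1/4}$ being absorbed into $p^{o(1)}$. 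This is exactly the asserted bound.
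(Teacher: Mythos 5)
Your first estimate (Cauchy--Schwarz plus Parseval), your observation about the non-homogeneity of the stated bound, and the final geometric-mean arithmetic are all fine; the problem is that your entire proof rests on the second estimate, and that is exactly where there is a genuine gap. You propose to obtain it by ``rerunning the method behind Theorem~\ref{thm:SAJq}'' while ``carrying the bounded weight $\beta_n$ through''. But the engine of that method is Lemma~\ref{lem:lin sums} applied to the \emph{unweighted} sum over the interval: $|\sum_{n\in\cJ}\ep(nx^{\ell})|\le\min\{N,p/\flp{x^{\ell}}\}$. This pointwise bound does two things at once: it makes the coefficient attached to $x$ small outside the level set where $\flp{x^{\ell}}$ is small, and it is what confines the congruence count to the configuration $0<u_i\le p/N$, $u_i^k\equiv\pm v_i^{\ell}\pmod p$, which is precisely what Pierce's Lemma~\ref{lem:Ikl} counts. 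With arbitrary weights $|\beta_n|\le 1$ none of this survives: $B(u)=\sum_{n\in\cJ}\beta_n\ep(nu)$ can equal $N$ at any prescribed point $u$ (take $\beta_n=\ep(-nu)$), so there is no pointwise decay, the level-set decomposition collapses, and ``completion'' of an incomplete sum whose weights have no arithmetic structure has no content; moment bounds on $B$ (your $\ell^1$/$\ell^2$ remark) cannot replace the pointwise bound, because the saving in Theorem~\ref{thm:SAJq} comes from the interaction between the level sets and the solution count, not from moments. Worse, your second estimate is not a modest technical lemma: for unimodular weights it equals $T^2/(p^{3/2}N^{1/2})$, where $T$ is the bound of Theorem~\ref{thm:sABJp}, so in every range where the theorem improves on the Parseval bound $p^{3/2}N^{1/2}$ your second estimate is \emph{strictly stronger} than the theorem you are trying to prove. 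The geometric-mean packaging does not reduce the difficulty; it concentrates all of it (and more) into the unproven claim.

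For comparison, the paper resolves the double-weight difficulty by a different maneuver. It applies Cauchy--Schwarz in the pair $(m,n)$ \emph{jointly}, stripping both weight sequences at once at the cost of summing $|S_{k,-1,p}(m,n)|^2$ over all $m\in\Z_p$ and $n\in\cJ$; it then expands the square, substitutes $x\mapsto x^{-1}$ and $y=x+z$, so that the $n$-sum becomes the unweighted linear sum $\sum_{n\in\cJ}\ep(-nz)$, to which Lemma~\ref{lem:lin sums} legitimately applies, producing weights $\eta_z\ll\min\{N,p/\flp{z}\}$ in the shift variable $z$. After a dyadic decomposition in $z$, H\"older in $m$ and orthogonality, one must count solutions of $\sum_{j=1}^{2\nu}\pm(x+z_j)^{-k}\equiv 0\pmod p$ with the $z_j$ in intervals \emph{shifted away from the origin}; this is covered by the Bourgain--Garaev bound (Lemma~\ref{lem:Ikl shift}), not by Pierce's Lemma~\ref{lem:Ikl}, and it is exactly this ingredient that forces $\ell=-1$ and prime modulus $p$ in Theorem~\ref{thm:sABJp}. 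If you want to keep your two-bound architecture, you would still need this Cauchy--Schwarz-plus-shift argument (or something equally strong) to establish your second estimate, at which point the geometric mean becomes superfluous.
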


\subsection{Bounds for almost all $q$}
\label{eq:almost all q}

We also show that  in the case of $\ell =1$ for almost all $q$ in a dyadic interval
$ [Q,2Q]$  stronger versions of the results of
Section~\ref{eq:every q} hold.

We also have an analogue of the second bound in Theorem~\ref{thm:SAJq}, but only for
the sums $ \sAJq$.

\begin{theorem}
\label{thm:SAJq Aver}   If $k>1$ is a fixed integer,
then, for any  fixed positive integer $\nu$, fixed real
$\varepsilon>0$  and sufficiently large real $Q\ge 1$,
for all but $O(Q^{1-\varepsilon})$
integers $q \in [Q, 2Q]$, we have
$$
 \sAJq\ll   \|\cA\|_{1}^{1-1/\nu} \|\cA \|_{2}^{1/\nu}\(q+q^{(\nu+1)/2\nu}N^{1/2}\)q^{\varepsilon}.
 $$
\end{theorem}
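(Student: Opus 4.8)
The plan is to reduce the weighted bilinear sum to a single high moment in the coefficient variable $m$, to extract the main (diagonal) term by Parseval, and to control the remaining off-diagonal contribution \emph{on average} over $q$ by a divisor-bound argument, finishing with Markov's inequality to produce the small exceptional set. First I would open the sum as $\sAJq=\sum_{x\in\Z_q^*}F(x)G(x)$ with $F(x)=\sum_m\alpha_m\eq(mx^k)$ and $G(x)=\sum_{n\in\cJ}\eq(n\bar x)$, so that $\sAJq=\sum_m\alpha_m U(m)$ with $U(m)=\sum_{x\in\Z_q^*}\eq(mx^k)G(x)$. A Hölder inequality (exponents $\nu/(\nu-1)$ and $\nu$) followed by Cauchy--Schwarz then separates the weights and gives
\[
|\sAJq|\le\|\cA\|_1^{1-1/\nu}\|\cA\|_2^{1/\nu}\Bigl(\sum_{m\in\Z_q}|U(m)|^{2\nu}\Bigr)^{1/2\nu}.
\]
Since the claimed $\cA$-dependence already appears here, it remains to show that for all but $O(Q^{1-\varepsilon})$ of the $q\in[Q,2Q]$ one has $\sum_m|U(m)|^{2\nu}\ll(q^{2\nu}+q^{\nu+1}N^\nu)q^{\varepsilon}$. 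Expanding the $2\nu$-th power and summing over $m\in\Z_q$ turns this into $q$ times a weighted count,
\[
\sum_{m\in\Z_q}|U(m)|^{2\nu}=q\sum_{\substack{x_1,\dots,x_\nu,y_1,\dots,y_\nu\in\Z_q^*\\x_1^k+\cdots+x_\nu^k\equiv y_1^k+\cdots+y_\nu^k\ (\mathrm{mod}\ q)}}\prod_iG(x_i)\prod_j\overline{G(y_j)}=:qV(q).
\]

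Next I would split $V(q)$ according to a clean, $q$-independent reformulation of the congruence. Reparametrising each $x_i$ by its inverse $t_i=\bar x_i$ (and $y_j$ by $s_j=\bar y_j$) replaces $G(x_i)$ by the geometric weight $W(t_i)=\sum_{n\in\cJ}\eq(nt_i)$, with $|W(t)|\le\min\{N,q/(2\flq{t})\}$, while clearing denominators rewrites the congruence as $\Psi(\mathbf t,\mathbf s)\equiv0\ (\mathrm{mod}\ q)$, where $\Psi=\sum_i\prod_{i'\ne i}t_{i'}^k\prod_js_j^k-\sum_j\prod_it_i^k\prod_{j'\ne j}s_{j'}^k$ is an integer polynomial. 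Writing $V(q)=V_0(q)+V_1(q)$ according to whether $\Psi=0$ or $\Psi\ne0$, the term $V_0$ is the \emph{absolute diagonal} $\sum_it_i^{-k}=\sum_js_j^{-k}$; its permutation solutions contribute, via the Parseval identity $\sum_{t\in\Z_q^*}|W(t)|^2\le qN$, at most $\nu!\,(qN)^\nu$, which is exactly the source of the $q^{(\nu+1)/2\nu}N^{1/2}$ term of the theorem. The non-permutation solutions of $\sum_it_i^{-k}=\sum_js_j^{-k}$ are sparse for $k>1$, and here is the one place where the hypothesis $k>1$ is genuinely used; they contribute an admissible amount for \emph{every} $q$.

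The heart of the argument, and the step I expect to be the main obstacle, is the off-diagonal term $V_1(q)$, which I would bound only on average. Because $\Psi$ is an integer independent of $q$, for each fixed tuple with $\Psi\ne0$ the condition $q\mid\Psi$ holds for at most $\tau(|\Psi|)\ll Q^{o(1)}$ moduli $q\in[Q,2Q]$. To make this usable I would decompose the weights dyadically by the size $\flq{t_i}\sim H$, so that each $t_i$ is a small signed residue (of absolute value $\le H$) with $|W(t_i)|\le\min\{N,q/H\}$; then $\Psi$ has size $H^{O(1)}\le Q^{O(1)}$, keeping the divisor estimate effective. On a block with all parameters of size $H$ there are $\ll H^{2\nu}$ tuples, each carrying weight $\le\min\{N,q/H\}^{2\nu}$, whence
\[
\sum_{q\in[Q,2Q]}V_1(q)\ll\sum_{H\ \text{dyadic}}\min\{NH,Q\}^{2\nu}Q^{o(1)}\ll Q^{2\nu+o(1)}.
\]
The difficulty is precisely that the weights $G(x_i;q)$ depend on $q$ through the inverses, so the pointwise bound $|G|\le N$ is far too lossy; the resolution is to transfer the concentration onto short intervals for $\bar x_i$ and to clear denominators, so that counting the admissible $q$ becomes a $q$-independent divisor problem while the mean square $\sum_t|W(t)|^2\le qN$ keeps the weighted count under control. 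Finally, Markov's inequality applied to $\sum_{q\in[Q,2Q]}qV_1(q)\ll Q^{2\nu+1+o(1)}$ shows that $qV_1(q)>q^{2\nu+\varepsilon}$ for at most $O(Q^{1-\varepsilon+o(1)})$ values of $q$; combining this with the diagonal bound, which is valid for all $q$, yields $\sum_m|U(m)|^{2\nu}\ll(q^{2\nu}+q^{\nu+1}N^\nu)q^{\varepsilon}$ off a small exceptional set, and hence the theorem after adjusting $\varepsilon$.
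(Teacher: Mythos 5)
Your proposal follows essentially the same route as the paper's proof: H\"older plus Cauchy--Schwarz to isolate the moment $\sum_{m}|U(m)|^{2\nu}$, orthogonality to convert it into a congruence count in the inverted variables, a split according to whether the cleared-denominator integer $\Psi$ vanishes --- the vanishing tuples being handled for every $q$ by the Karatsuba squarefull-product argument (Lemma~\ref{lem:Recipr Eq}), the non-vanishing ones on average over $q\in[Q,2Q]$ by the divisor bound (Lemma~\ref{lem:Ikl Aver}) --- and finally Markov's inequality to extract the exceptional set (Corollary~\ref{cor:Ikl Aver}). The differences (you keep the weights and use Parseval for the permutation part of the diagonal, where the paper first removes the weights through an $e$-adic decomposition and quotes an unweighted count) are organizational, not substantive. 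Incidentally, your claim that $k>1$ is what makes the non-permutation solutions of $\sum_i t_i^{-k}=\sum_j s_j^{-k}$ sparse is not quite right: the squarefull-product argument works equally well for $k=1$, which is Karatsuba's original case.

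The one place where your write-up delivers something weaker than what the paper's proof gives is the uniformity of the exceptional set. The quantity $V_1(q)$ to which you apply Markov's inequality is built from the weights $W$, hence depends on $N$ (and formally on $L$); consequently your bad set of moduli depends on the interval $\cJ$. The paper avoids this by defining the exceptional set once and for all through Corollary~\ref{cor:Ikl Aver}, which for every good $q$ bounds $I_{k,\nu,q}^*(U)$ for \emph{every} $U\le q$ simultaneously --- obtained by a union bound over the $O(\log Q)$ values $U=e^i$ --- so that the final estimate then holds for all weights $\cA$ and all intervals $\cJ$ with a single exceptional set. Your argument admits the same repair: majorize $|W(t)|$ by $\min\{N,q/\flq{t}\}$ (which removes the $L$-dependence and is monotone in $N$), run the Markov step only for the dyadic values $N=2^j$, and take a union bound over $j$; since your average bound $\sum_{q}V_1(q)\ll Q^{2\nu+o(1)}$ is uniform in $N$, this costs only a logarithm. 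With that routine adjustment your proof is complete and matches the paper's.
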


We now have the following version of Corollary~\ref{cor:SIJq} (however this time
we cannot interchange the roles of $M$ and $N$).

\begin{cor}
\label{cor:SIJq Aver}   If $k>1$ is a fixed integer,
then, for any  fixed positive integer $\nu$, fixed real
$\varepsilon>0$ and sufficiently large real $Q\ge 1$,
for all but $O(Q^{1-\varepsilon})$
integers $q \in [Q, 2Q]$, we have
$$
 \sIJq  \ll  M^{1-1/2\nu} \(q + q^{(\nu +1)/2 \nu} N^{1/2} \)  q^{\varepsilon}.
$$
\end{cor}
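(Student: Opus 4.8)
The plan is to derive Corollary~\ref{cor:SIJq Aver} from Theorem~\ref{thm:SAJq Aver} by a direct specialization of the weights, exactly as Corollary~\ref{cor:SIJq} is obtained from Theorem~\ref{thm:SAJq}. First I would take the weight sequence $\cA = \{\alpha_m\}$ to be the characteristic function of the interval $\cI = \{K+1,\ldots,K+M\}$, that is, $\alpha_m = 1$ when $m\in\cI$ and $\alpha_m = 0$ otherwise, regarded as a sequence supported on $\Z_q$. With this choice the weighted sum $\sAJq$ collapses to the unweighted sum $\sIJq$ over the product of intervals $\cI\times\cJ$, and the exceptional set of moduli $q\in[Q,2Q]$ of size $O(Q^{1-\varepsilon})$ outside which the bound of Theorem~\ref{thm:SAJq Aver} holds is inherited verbatim, being simply the exceptional set produced by that theorem applied to this particular sequence.

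Next I would evaluate the two norms appearing in the theorem. For the indicator weights one has $\|\cA\|_1 = M$ and $\|\cA\|_2 = M^{1/2}$, whence
$$
\|\cA\|_1^{1-1/\nu}\|\cA\|_2^{1/\nu} = M^{1-1/\nu}\cdot M^{1/(2\nu)} = M^{1-1/(2\nu)}.
$$
Substituting this into the bound of Theorem~\ref{thm:SAJq Aver} immediately yields
$$
\sIJq \ll M^{1-1/(2\nu)}\(q + q^{(\nu+1)/2\nu}N^{1/2}\)q^{\varepsilon},
$$
which is precisely the claimed estimate; the factors $q^{\varepsilon}$ and the implied constants (depending only on $k$, $\nu$, $\varepsilon$) carry over with no adjustment. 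Since every step is bookkeeping, there is no substantive analytic obstacle in the derivation itself.

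The one point that genuinely requires attention — and the reason the statement explicitly warns that the roles of $M$ and $N$ cannot be interchanged here — is the asymmetry of the underlying theorem. Whereas Theorem~\ref{thm:SAJq} and Corollary~\ref{cor:SIJq} are stated for a general coprime pair $(k,\ell)$ and therefore enjoy the trivial symmetry $\cS_{k,\ell,q}(m,n) = \cS_{\ell,k,q}(n,m)$ that permits swapping the two intervals, Theorem~\ref{thm:SAJq Aver} is tied to the exponent pair $(k,-1)$ with $k>1$. Swapping the roles of $m$ and $n$ would force the pair $(-1,k)$, which falls outside the hypothesis $k>1$, so no such interchange is available. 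Consequently I would present the corollary only in the stated orientation, with the weights — and hence the factor $M^{1-1/(2\nu)}$ — attached to the variable governed by the exponent $k$, and the interval length $N$ entering through the variable governed by the exponent $-1$.
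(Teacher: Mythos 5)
Your proof is correct and is precisely the paper's (implicit) derivation: the corollary is obtained from Theorem~\ref{thm:SAJq Aver} by taking $\cA$ to be the indicator of $\cI$, so that $\|\cA\|_1^{1-1/\nu}\|\cA\|_2^{1/\nu}=M^{1-1/(2\nu)}$, with the exceptional set of moduli inherited from the theorem. Your explanation of why $M$ and $N$ cannot be interchanged---the averaged theorem applies only to the exponent pair $(k,-1)$ with $k>1$, unlike the symmetric setting of Theorem~\ref{thm:SAJq} and Corollary~\ref{cor:SIJq}---also matches the paper's parenthetical remark.
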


\section{Preparations}

  \subsection{Linear and bilinear exponential sums}

We need the following well-known simple results.

First we recall the following bound of linear sums~\cite[Bound~(8.6)]{IwKow}.

\begin{lemma}
\label{lem:lin sums}
For any  integers $u$,  $L$ and $N\ge 1$,
we have
$$
\sum_{n=L+1}^{L+N} \eq(nu) \ll \min\left\{N, \frac{q}{\flq{u}}\right \}.
$$
\end{lemma}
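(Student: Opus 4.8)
The plan is to prove the elementary bound on the linear exponential sum
$$
S = \sum_{n=L+1}^{L+N} \eq(nu)
$$
by splitting into two cases according to whether $\flq{u} = 0$ or not. First I would observe that if $q \mid u$, then each term $\eq(nu) = 1$, so $S = N$, which is trivially bounded by $N$; in this case the right-hand side is interpreted as $N$ since $q/\flq{u}$ would be infinite (or simply undefined), so the claimed $\min$ reduces to $N$ and there is nothing to prove. The substance is the case $\flq{u} \ne 0$, where I want to show both $|S| \le N$ and $|S| \ll q/\flq{u}$.

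The bound $|S| \le N$ is immediate from the triangle inequality, since each summand has modulus $1$. For the bound $|S| \ll q/\flq{u}$, the key step is to recognize $S$ as a geometric progression with common ratio $\zeta = \eq(u) = \exp(2\pi i u/q)$. Summing the geometric series gives
$$
S = \eq((L+1)u) \cdot \frac{\zeta^N - 1}{\zeta - 1},
$$
so that $|S| = |\zeta^N - 1|/|\zeta - 1| \le 2/|\zeta - 1|$, using $|\zeta^N - 1| \le 2$. It then remains to bound $|\zeta - 1|$ from below. Writing $|\zeta - 1| = |\exp(2\pi i u/q) - 1| = 2|\sin(\pi u/q)|$, I would invoke the standard inequality $|\sin \pi t| \ge 2\|t\|$ for real $t$, where $\|t\|$ denotes the distance to the nearest integer. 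Since $\|u/q\| = \flq{u}/q$ by definition of $\flq{u}$, this yields $|\zeta - 1| \ge 4\flq{u}/q$, and hence $|S| \le 2/(4\flq{u}/q) = q/(2\flq{u}) \ll q/\flq{u}$.

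The only mild technical point — which I would not call a genuine obstacle — is the lower bound $|\sin \pi t| \ge 2\|t\|$, which follows from the concavity of $\sin$ on $[0,\pi/2]$ together with the periodicity and symmetry of $|\sin \pi t|$, reducing to the inequality $\sin x \ge (2/\pi)x$ on $[0,\pi/2]$. Combining the two estimates $|S| \le N$ and $|S| \ll q/\flq{u}$ then gives the stated bound by taking the minimum. Since this is a well-known result cited from~\cite[Bound~(8.6)]{IwKow}, the proof is entirely routine, and the main value of recording it is to fix the normalization of $\flq{\cdot}$ and the implied constant for use in the later bilinear estimates.
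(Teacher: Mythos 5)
Your proof is correct, and it is the standard argument: the paper itself gives no proof of this lemma, simply citing it as Bound~(8.6) of Iwaniec--Kowalski, and your geometric-series estimate with the lower bound $|\sin \pi t| \ge 2\|t\|$ is precisely the classical derivation behind that citation. The case split at $\flq{u}=0$, the identity $|\zeta-1| = 2|\sin(\pi u/q)|$, and the normalization $\|u/q\| = \flq{u}/q$ are all handled correctly, so nothing is missing.
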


We also  need the following well-known result, which dates back to
 Vinogradov~\cite[Chapter~6, Problem~14.a]{Vin}.

\begin{lemma}
\label{lem:DoubleSum} For arbitrary set $\cU, \cV \subseteq \Z_q$ and complex
numbers $\varphi_u$ and $\psi_v$ with
$$
\sum_{u \in \cU} |\varphi_u|^2  \le \varPhi  \mand
\sum_{v \in \cV}|\psi_v|^2 \le \varPsi,
$$
we have
$$
\left|
\sum_{u\in \cU} \sum_{v \in \cV}
\varphi_u \psi_v \eq(uv) \right| \le \sqrt{\varPhi\varPsi q }.
$$
\end{lemma}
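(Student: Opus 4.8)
The plan is to treat this as a standard application of the Cauchy--Schwarz inequality followed by the orthogonality of additive characters modulo $q$. Writing $S$ for the double sum in the statement, I would first single out the variable $u$ and apply Cauchy--Schwarz in the form
$$
|S|^2 = \left| \sum_{u\in \cU} \varphi_u \sum_{v\in \cV} \psi_v \eq(uv) \right|^2 \le \( \sum_{u\in \cU} |\varphi_u|^2 \) \sum_{u\in \cU} \left| \sum_{v\in \cV} \psi_v \eq(uv) \right|^2 ,
$$
so that, by hypothesis, the first factor is bounded by $\varPhi$.

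For the second factor, the one genuinely useful step is to drop the restriction $u\in \cU$ and extend the outer summation to the full ring $\Z_q$; since every summand is nonnegative this only enlarges the quantity. Expanding the square then gives
$$
\sum_{u\in \cU} \left| \sum_{v\in \cV} \psi_v \eq(uv) \right|^2 \le \sum_{v,v'\in \cV} \psi_v \overline{\psi_{v'}} \sum_{u\in \Z_q} \eq\(u(v-v')\) .
$$
I would then invoke the orthogonality relation: the inner sum over $u\in \Z_q$ equals $q$ when $v\equiv v' \pmod q$ and vanishes otherwise. This collapses the sum over the pair $(v,v')$ to its diagonal, leaving $q\sum_{v\in \cV}|\psi_v|^2 \le q\varPsi$.

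Multiplying the two bounds yields $|S|^2 \le \varPhi \cdot q \varPsi$, and extracting the square root gives the claimed inequality $|S| \le \sqrt{\varPhi\varPsi q}$.

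I do not anticipate any real obstacle here, as the argument is entirely routine; the only point worth isolating is the positivity step of completing the $u$-sum to all of $\Z_q$, which is exactly what turns the restricted character sum into an exact orthogonality computation and thereby removes any dependence on the shape of $\cU$. I note finally that the roles of $\cU$ and $\cV$ are symmetric, so one could equally apply Cauchy--Schwarz in $v$ first and complete the $v$-sum instead.
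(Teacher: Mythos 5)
Your proof is correct: the Cauchy--Schwarz step, the positivity argument completing the $u$-sum to all of $\Z_q$, and the orthogonality of additive characters together give exactly $|S|^2 \le \varPhi \cdot q\varPsi$. The paper itself offers no proof, citing Vinogradov instead, and your argument is precisely the classical one behind that reference, so there is nothing to reconcile.
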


  \subsection{Some equations and congruences}
 \label{sec:CongEqReipr}

 We start with  a very simple result on the monomial congruences.

\begin{lemma}
\label{lem:Monom}   If $k$  is a nonzero integer  then for any $a \in \Z_q$
the congruence
$$
x^k \equiv a \pmod q, \qquad x \in \Z_q^*,
$$
has at most $q^{o(1)}$ solutions.
\end{lemma}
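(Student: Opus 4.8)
The plan is to reduce the count to a purely group-theoretic question about the unit group $\Z_q^*$ and then exploit the multiplicativity coming from the Chinese Remainder Theorem. First I would dispose of the sign of $k$: the map $x \mapsto x^{-1}$ is a bijection of $\Z_q^*$, so replacing $k$ by $-k$ leaves the number of solutions unchanged, and we may assume $k \ge 1$. Next, observe that if the congruence $x^k \equiv a \pmod q$ has at least one solution $x_0 \in \Z_q^*$, then every solution is of the form $x = x_0 \zeta$ with $\zeta^k \equiv 1 \pmod q$; that is, the solution set is a coset of the subgroup
$$
R_k(q) = \{\zeta \in \Z_q^* : \zeta^k \equiv 1 \pmod q\}
$$
of $k$-th roots of unity, since $(x_0\zeta)^k \equiv a$ exactly when $\zeta^k \equiv 1$. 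Hence the number of solutions is either $0$ or $\# R_k(q)$, and in particular is bounded by $\# R_k(q)$ uniformly in $a$.

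It now suffices to show $\# R_k(q) = q^{o(1)}$. By the Chinese Remainder Theorem $\Z_q^* \cong \prod_{p^\alpha \| q} \Z_{p^\alpha}^*$, so $q \mapsto \# R_k(q)$ is multiplicative and
$$
\# R_k(q) = \prod_{p^\alpha \| q} \# R_k(p^\alpha).
$$
For an odd prime $p$ the group $\Z_{p^\alpha}^*$ is cyclic of order $\varphi(p^\alpha)$, and in a cyclic group of order $m$ the number of solutions of $\zeta^k = 1$ is exactly $\gcd(k,m)$; since $\gcd(k,m)$ divides $k$, we obtain $\# R_k(p^\alpha) = \gcd(k, \varphi(p^\alpha)) \le k$. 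For $p = 2$ the group $\Z_{2^\alpha}^*$ is cyclic for $\alpha \le 2$ and isomorphic to $\Z_2 \times \Z_{2^{\alpha-2}}$ for $\alpha \ge 3$; a short case check on the two factors shows $\# R_k(2^\alpha) \le 2k$ in every case. Thus $\# R_k(p^\alpha) \le 2k$ for each prime power dividing $q$.

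Combining these local bounds yields $\# R_k(q) \le (2k)^{\omega(q)}$, where $\omega(q)$ is the number of distinct prime divisors of $q$. Since $k$ is fixed and $2^{\omega(q)} \le \tau(q) = q^{o(1)}$ by the classical divisor bound, while $(2k)^{\omega(q)} = (2^{\omega(q)})^{\log_2(2k)}$, we conclude that $\# R_k(q) = q^{o(1)}$, which finishes the proof. There is no serious obstacle here; the only mildly delicate point is the $p = 2$ case, where the non-cyclic structure for $\alpha \ge 3$ forces the local factor $2k$ rather than $k$. As this is a bounded local correction it is absorbed into the constant, and the real content is simply that each local factor is $O_k(1)$ while, on the exponential scale, the number of local factors is $q^{o(1)}$.
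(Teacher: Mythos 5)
Your proof is correct, but it follows a genuinely different route from the paper's. The paper disposes of the lemma in two lines: after reducing to $a \in \Z_q^*$ (otherwise there are no solutions), it notes that the discriminant of $X^k - a$, which is $\pm k^k a^{k-1}$, has greatest common divisor with $q$ bounded in terms of $k$ alone, and then invokes Huxley's general bound on the number of roots of a polynomial congruence modulo $q$, which gives $q^{o(1)}$ precisely under such a discriminant condition. You instead exploit the special multiplicative structure of the binomial: the solution set, when nonempty, is a coset of the group $R_k(q)$ of $k$-th roots of unity in $\Z_q^*$, and you bound $\# R_k(q) \le (2k)^{\omega(q)} = q^{o(1)}$ via the Chinese Remainder Theorem and the known structure of $\Z_{p^\alpha}^*$ (cyclic for odd $p$; $\Z_2 \times \Z_{2^{\alpha-2}}$ for $p=2$, $\alpha \ge 3$). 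Your reduction of negative $k$ to positive $k$ through the bijection $x \mapsto x^{-1}$ is also sound, and all steps hold for arbitrary (not necessarily squarefree) $q$, as the lemma requires. What your argument buys is a completely elementary, self-contained proof with the explicit bound $(2k)^{\omega(q)}$, avoiding any external input; what the paper's route buys is generality, since Huxley's theorem applies to arbitrary polynomials whose discriminant is nearly coprime to $q$, not only to binomials whose solution sets happen to be cosets of a subgroup.
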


\begin{proof}  Clearly we can assume that $a\in \Z_q^*$ as otherwise there is no
solution. Then the discriminant of the polynomial $X^k-a$ has a bounded greatest common
divisor with $q$ and the result follows from the general bound of Huxley~\cite{Hux}.
\end{proof}

We also need several results of Pierce~\cite{Pierce}, which in turn generalises   previous results of Heath-Brown~[Lemma~1]\cite{H-B2} (which corresponds to $\ell = -1$).
We present these results in slightly more general forms (which are however implicitly contained
 in the argument of~\cite{Pierce}.

 For an integer $\nu \ge 1$ and real $U$ let $I_{k,\ell,\nu,q}(U)$  be the number of solutions to
 the system of congruences
  \begin{equation*}
\begin{split}
v_1+ \ldots+  v_\nu & \equiv  v_{\nu+1}+ \ldots+ v_{2\nu}\pmod q,\\
u_i^k \equiv  v_i^\ell& \pmod q, \quad i =1,\ldots, 2\nu,
\end{split}
\end{equation*}
with  $1 \le u_1,  \ldots, u_{2\nu} \le U$ and unrestricted variables $v_1, \ldots,  v_{2\nu} \in \Z_q$.
We have the following slight extension of the bound of Pierce~\cite[Equation~(6.2)]{Pierce}
(which is free of the  restriction $U \le q^{(\nu+1)/2\nu}$).

We recall that all implied constants are allowed to depend on $\nu$.

\begin{lemma}
\label{lem:Ikl}   If $k\ne \ell$ are  fixed nonzero integers with
$\gcd(k,\ell)=1$, then, for any  fixed positive integer $\nu$, squarefree $q\ge 1$  and
$U \le q$   we have
$$
I_{k,\ell,\nu,q}(U) \le \( U^{2\nu}q^{-1} + U^{2\nu^2/(\nu+1)}\)q^{o(1)}.
$$
\end{lemma}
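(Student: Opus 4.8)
The plan is to detect the additive relation by the orthogonality of additive characters, thereby writing $I_{k,\ell,\nu,q}(U)$ as a $2\nu$-th moment of a single exponential sum, then to isolate the diagonal term (which produces the first term of the bound) and to feed the off-diagonal part into the estimate of Pierce, extending that estimate past its stated range by a splitting argument. For $\lambda\in\Z_q$ put
\[
T(\lambda) = \sum_{u=1}^{U}\ \sum_{\substack{v \in \Z_q\\ v^\ell \equiv u^k}} \eq(\lambda v).
\]
Detecting $v_1+\dots+v_\nu\equiv v_{\nu+1}+\dots+v_{2\nu}\pmod q$ by orthogonality and using $\overline{T(\lambda)}=T(-\lambda)$, I get
\[
I_{k,\ell,\nu,q}(U) = \frac1q\sum_{\lambda\in\Z_q} |T(\lambda)|^{2\nu}.
\]
The term $\lambda=0$ equals $T(0)=\#\{(u,v):1\le u\le U,\ v^\ell\equiv u^k\}$, and by Lemma~\ref{lem:Monom} each $u$ admits at most $q^{o(1)}$ companions $v$, so $T(0)\le Uq^{o(1)}$ and this term contributes at most $U^{2\nu}q^{-1+o(1)}$. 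This is precisely the first term of the claimed bound and is the expected main term.

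For the off-diagonal part I invoke the bound of Pierce~\cite[Equation~(6.2)]{Pierce}, which in the present normalisation reads $I_{k,\ell,\nu,q}(U)\le U^{2\nu^2/(\nu+1)}q^{o(1)}$ for $U\le q^{(\nu+1)/2\nu}$, and whose proof is where the hypotheses $\gcd(k,\ell)=1$, the squarefreeness of $q$, and the monomial-congruence count of Lemma~\ref{lem:Monom} are really used. This settles the range $U\le q^{(\nu+1)/2\nu}$, in which the first term is dominated (the two terms coincide, both equal to $q^{\nu+o(1)}$, at the threshold $U=q^{(\nu+1)/2\nu}$). To remove the restriction I reduce the remaining range $q^{(\nu+1)/2\nu}<U\le q$ to the threshold case: set $U_0=q^{(\nu+1)/2\nu}$, split $\{1,\dots,U\}$ into $R=\lceil U/U_0\rceil\le 2U/U_0$ consecutive intervals $\cI_1,\dots,\cI_R$ of length at most $U_0$, and write $T(\lambda)=\sum_{r=1}^R T_r(\lambda)$ with $T_r(\lambda)=\sum_{u\in\cI_r}\sum_{v^\ell\equiv u^k}\eq(\lambda v)$. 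Applying Hölder's inequality in the form $|\sum_r T_r(\lambda)|^{2\nu}\le R^{2\nu-1}\sum_r|T_r(\lambda)|^{2\nu}$, then summing over $\lambda$ and dividing by $q$, gives $I_{k,\ell,\nu,q}(U)\le R^{2\nu-1}\sum_{r=1}^R I_{\cI_r}$, where $I_{\cI_r}$ is the analogous count with all $u_i$ confined to $\cI_r$. Granting $I_{\cI_r}\le U_0^{2\nu^2/(\nu+1)}q^{o(1)}$ for each $r$ and using $R\le 2U/U_0$, I obtain
\[
I_{k,\ell,\nu,q}(U)\le R^{2\nu}U_0^{2\nu^2/(\nu+1)}q^{o(1)}\le U^{2\nu}U_0^{2\nu^2/(\nu+1)-2\nu}q^{o(1)}=U^{2\nu}q^{-1+o(1)},
\]
since $U_0^{2\nu^2/(\nu+1)-2\nu}=U_0^{-2\nu/(\nu+1)}=q^{-1}$. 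Combining the two ranges yields the claimed bound $I_{k,\ell,\nu,q}(U)\le\bigl(U^{2\nu}q^{-1}+U^{2\nu^2/(\nu+1)}\bigr)q^{o(1)}$.

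The genuinely hard input is Pierce's off-diagonal estimate, which I treat as a black box; the Hölder recombination is routine. The one point in the extension that requires care — and which I expect to be the main obstacle — is the \emph{uniformity} of Pierce's bound over shifted intervals, i.e. that $I_{\cI_r}\le U_0^{2\nu^2/(\nu+1)}q^{o(1)}$ holds for every interval $\cI_r$ of length at most $U_0$ and not only for $\{1,\dots,U_0\}$. This is implicit in Pierce's completion argument, since the interval enters his count solely through linear exponential sums $\sum_{u\in\cI_r}\eq(\mu u)$, whose size depends on the length of $\cI_r$ alone by Lemma~\ref{lem:lin sums}; nevertheless this location-uniformity must be checked against the argument of~\cite{Pierce} rather than merely quoted, which is exactly the sense in which the stronger statement is ``implicitly contained'' in that argument.
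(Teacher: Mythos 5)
Your reduction to the threshold $U_0=q^{(\nu+1)/2\nu}$ is arithmetically sound: the H\"older recombination is valid, and the computation $R^{2\nu}U_0^{2\nu^2/(\nu+1)}\ll U^{2\nu}U_0^{-2\nu/(\nu+1)}=U^{2\nu}q^{-1}$ checks out. But the whole argument hinges on the input you flag and then wave through, namely the bound $I_{\cI_r}\le U_0^{2\nu^2/(\nu+1)}q^{o(1)}$ for intervals $\cI_r$ of length $U_0$ located \emph{anywhere} in $[1,q]$, and this shifted-interval statement is a genuine gap, not a routine uniformity check. Your justification --- that the interval enters Pierce's argument only through linear exponential sums $\sum_{u\in\cI_r}\eq(\mu u)$, whose size depends on length alone --- does not reflect how the bound is actually proved. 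The inequality extracted from~\cite[Section~6.3]{Pierce} is
$$
I_{k,\ell,\nu,q}(U) \le \( Q^{-1}U^{2\nu-1} + Q^{\nu} U^{\nu}\)q^{o(1)},
\qquad Q<U,\quad 8QU\le q,
$$
and the side condition $8QU\le q$ couples the \emph{size} of the variables $u_i$ (not merely the length of the interval containing them) to the modulus: the argument exploits that the $u_i$ and their small dilates are small compared with $q$, which fails for an interval shifted far from the origin. Indeed, position-uniformity is exactly where such counting problems become hard: in the only case where a shifted-interval analogue is known --- $\ell=-1$ and prime $q=p$ --- it is Proposition~1 of Bourgain and Garaev~\cite{BouGar}, reproduced as Lemma~\ref{lem:Ikl shift} in this paper, and it is proved by additive-combinatorial methods that go well beyond any completion argument. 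The paper cites it as a separate, nontrivial ingredient precisely because the Heath-Brown--Pierce arguments do not yield it. For general coprime $k\ne\ell$ and composite squarefree $q$, which is the setting of the present lemma, no shifted-interval result exists in the literature, so your ``granting $I_{\cI_r}\le U_0^{2\nu^2/(\nu+1)}q^{o(1)}$'' is not a verification left to the reader but an open problem, and the proof does not close.

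The detour is also unnecessary, which is instructive: the paper's own proof never restricts to $U\le q^{(\nu+1)/2\nu}$ and needs no splitting. It applies the displayed parametric inequality directly for all $U\le q$, with the single choice $Q=\min\bigl\{U^{(\nu-1)/(\nu+1)},\,q/8U\bigr\}$. When the minimum is $U^{(\nu-1)/(\nu+1)}$, both terms equal $U^{2\nu^2/(\nu+1)}$; when it is $q/8U$, one has $Q^{-1}U^{2\nu-1}=8U^{2\nu}/q$ while $Q^{\nu}U^{\nu}=(q/8)^{\nu}\le U^{2\nu^2/(\nu+1)}$, since $q\le 8U^{2\nu/(\nu+1)}$ in that regime. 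Thus both terms of the lemma fall out of one parameter choice, and the restriction on $U$ is removed at the level of Pierce's parametric inequality rather than by gluing threshold-length intervals. If you wished to salvage your route, it would work in the special case $\ell=-1$, $q=p$ prime by replacing your uniformity claim with an appeal to Lemma~\ref{lem:Ikl shift}; but for the lemma as stated, the shifted-interval input your argument requires is simply not available.
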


\begin{proof} We use the following inequality given (in a slightly more precise form) in~\cite[Section~6.3]{Pierce}:
$$
I_{k,\ell,\nu,q}(U) \le \( Q^{-1}U^{2\nu-1} + Q^{\nu} U^{\nu}\)q^{o(1)},
$$
holds for any $Q$, satisfying the conditions
$$
Q < U  \mand 8QU \le q.
$$
Thus taking
$$
Q = \min\left\{U^{(\nu-1)/(\nu + 1)}, q/8U\right\},
$$
we obtain the result.
\end{proof}

Furthermore, for $\ell=-1$ and prime $q =p$, Bourgain and Garaev~\cite[Proposition~1]{BouGar}
extend Lemma~\ref{lem:Ikl} to solutions in intervals away from the origin.
 For a real $U$ and $W$ let $I_{k,\nu,q}^*(U,W)$  be the number of solutions to
 the system of congruences
\begin{equation*}
\begin{split}
\frac{1}{u_1^k}+ \ldots+ \frac{1}{u_\nu^k} &\equiv \frac{1}{u_{\nu+1}^k}+ \ldots+\frac{1}{u_{2\nu}^k} \pmod q, \\
W+1 \le &u_1,  \ldots, u_{2\nu} \le W+U.
\end{split}
\end{equation*}
Then, by~\cite[Proposition~1]{BouGar} we have the following estimate

\begin{lemma}
\label{lem:Ikl shift}   If $k>1$ is a fixed integer, then, for any  fixed positive integer $\nu$,
prime  $p\ge 1$  and $U \le p$ we have
$$
I_{k,\nu,p}^*(W,U) \le \( U^{2\nu}p^{-1} + U^{2\nu^2/(\nu+1)}\)p^{o(1)}.
$$
\end{lemma}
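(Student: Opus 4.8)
The plan is to reduce the lemma to a single flexible estimate and then optimise, exactly as in the proof of Lemma~\ref{lem:Ikl}. First I would record the additive-energy interpretation: $I_{k,\nu,p}^*(W,U)$ is the $2\nu$-fold additive energy in $\Z_p$ of the set $\{u^{-k}:\ W+1\le u\le W+U\}$, equivalently
$$
I_{k,\nu,p}^*(W,U) = \frac{1}{p}\sum_{a \in \Z_p}|T(a)|^{2\nu}, \qquad T(a) = \sum_{u=W+1}^{W+U}\ep\(a u^{-k}\).
$$
The term $a=0$ already produces the first term $U^{2\nu}p^{-1}$ of the claimed bound, while the degenerate solutions, in which the two sides match as multisets, number $O\(U^{\nu}\)$ by Lemma~\ref{lem:Monom} and are absorbed into the second term; the whole point is to control the remaining off-diagonal contribution.

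The heart of the matter is the shifted analogue of the inequality Pierce supplies in the proof of Lemma~\ref{lem:Ikl}, namely
$$
I_{k,\nu,p}^*(W,U) \le \(Q^{-1}U^{2\nu-1} + Q^{\nu}U^{\nu}\)p^{o(1)},
$$
valid for every $Q$ with $Q<U$ and $8QU\le p$. I would establish it by partitioning $[W+1,W+U]$ into $\sim U/Q$ blocks of length $Q$, pigeonholing each of the $2\nu$ variables into a block, and counting the solutions compatible with each configuration; the two terms come, respectively, from the configurations in which the congruence is genuinely constraining and those in which it is essentially free, with the threshold governed by $Q$. The hypothesis $8QU\le p$ is what guarantees that the reciprocal combinations produced inside a block separate modulo $p$ without wrap-around, so that the local counts behave as over the integers.

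Granting this inequality, the lemma follows by the same optimisation as in Lemma~\ref{lem:Ikl}: the choice
$$
Q = \min\left\{U^{(\nu-1)/(\nu+1)},\, p/8U\right\}
$$
balances the two terms and yields $\(U^{2\nu}p^{-1}+U^{2\nu^2/(\nu+1)}\)p^{o(1)}$, with no residual restriction on the range of $U$.

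The step I expect to be genuinely delicate --- and the reason the estimate is quoted from Bourgain and Garaev~\cite[Proposition~1]{BouGar} rather than deduced directly from Lemma~\ref{lem:Ikl} --- is the presence of the shift $W$. For $W=0$ Pierce's argument leans on the dilation structure of $[1,U]$, which is lost once $W\ne0$: clearing denominators inside a block then produces integers of size of order $W^{k-1}$ rather than $U^{k-1}$, so the local count can no longer be read off from a bounded non-modular equation. Carrying out the block-by-block count honestly modulo $p$ while retaining the same savings is the crux, and it is exactly this that the Bourgain--Garaev result provides.
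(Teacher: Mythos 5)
The paper offers no independent proof of this lemma: it is imported verbatim from Bourgain and Garaev~\cite[Proposition~1]{BouGar}, as the sentence introducing it (``Then, by~\cite[Proposition~1]{BouGar} we have the following estimate'') makes explicit. Your proposal, once the speculative block-decomposition scaffolding is stripped away, does exactly the same thing --- you correctly identify that the shifted-interval count is precisely what the cited Bourgain--Garaev proposition supplies and defer the crux to it --- so your attempt is consistent with, and essentially identical to, the paper's treatment.
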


%
%
%

In the case $\ell=-1$, following our previous convention, we denote
$$
I_{k,\nu,q}^*(U) =  I_{k,-1,\nu,q}(U).
 $$
We now show that  one can get a better bound on $I_{k,\nu,q}^*(U)$
and thus on $T_{k,q}^*(U)$ on average over $q$ in a dyadic interval $[Q,2Q]$.

Indeed, let $J_{k,\nu}(U)$ be the number
of solutions to the equation
 \begin{equation}
\label{eq:Jk}
\frac{1}{u_1^k}+ \ldots+ \frac{1}{u_\nu^k} = \frac{1}{u_{\nu+1}^k}+ \ldots+\frac{1}{u_{2\nu}^k}, \qquad
1 \le u_1,  \ldots, u_{2\nu} \le U.
  \end{equation}
We have the following bound, which is a slight modification
of a result of Karatsuba~\cite{Kar}, corresponding to $k=1$ and presented in
the proof of~\cite[Theorem~1]{Kar}).

\begin{lemma}
\label{lem:Recipr Eq}   If $k>1$ is a fixed integer,
then, for any fixed positive integer $\nu$,  we have
$$
J_{k,\nu}(U) \le U^{\nu + o(1)}.
$$
\end{lemma}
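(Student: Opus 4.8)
The plan is to prove the bound by induction on $\nu$, following the strategy of Karatsuba~\cite{Kar} for the case $k=1$ while tracking the dependence on $k>1$. The base case $\nu=1$ is immediate: the equation $u_1^{-k}=u_2^{-k}$ in positive integers forces $u_1=u_2$, so $J_{k,1}(U)=U\le U^{1+o(1)}$. For the inductive step I would assume $J_{k,\mu}(U)\le U^{\mu+o(1)}$ for all $\mu<\nu$, and split the solutions of~\eqref{eq:Jk} according to the two \emph{smallest} variables on each side (equivalently, the two largest reciprocal terms). Let $a=\min(u_1,\dots,u_\nu)$ and $b=\min(u_{\nu+1},\dots,u_{2\nu})$; by the symmetry between the two sides, assume $a\le b$, losing only a bounded factor absorbed into $U^{o(1)}$.

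The first and easier case is when the two sides share their smallest variable, $a=b$. Here the two terms $1/a^k$ cancel, and what remains is a solution of the corresponding equation with $\nu-1$ reciprocals on each side and all variables in $[1,U]$. For each fixed $a$ there are at most $J_{k,\nu-1}(U)\le U^{\nu-1+o(1)}$ such solutions (dropping the constraint that the remaining variables be $\ge a$ only increases the count), and summing over the $U$ possible values of $a$ contributes $U\cdot U^{\nu-1+o(1)}=U^{\nu+o(1)}$, as required.

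The remaining case, $a<b$, is the main obstacle. Since $1/a^k$ is then the unique largest reciprocal and both sides are equal, comparing this single term with the opposite side gives $1/a^k\le \nu/b^k$, so the two smallest variables are comparable, $a<b\le \nu^{1/k}a$. Removing one smallest term from each side, the count of such solutions is governed by the number of representations of the fixed nonzero rational $1/a^k-1/b^k$ as a difference of two $(\nu-1)$-fold sums of reciprocal $k$-th powers, summed over the comparable pairs $(a,b)$. Bounding this shifted count uniformly (or nearly so) is the crux. For $k=1$ one clears denominators and uses a factorization of the shape $(au+v)(au'-v)=\pm v^2$, after which the divisor bound yields $U^{o(1)}$ representations for each fixed difference; this is precisely Karatsuba's~\cite{Kar} argument and assembles to $U^{\nu+o(1)}$. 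For $k>1$ this exact factorization is unavailable, so the required modification is to control the representations of a fixed nonzero value as a difference of reciprocal $k$-th powers by a divisor/$\gcd$ estimate on the cleared equation $v(x^k-y^k)=w\,x^ky^k$; this is the step I expect to demand the most care, and it is where keeping $k$ fixed (so that $k$-dependent constants are absorbed into $U^{o(1)}$) is essential. Combining the two cases then gives $J_{k,\nu}(U)\le U^{\nu+o(1)}$.
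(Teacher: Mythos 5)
There is a genuine gap, and you have in fact pointed at it yourself: the case $a<b$ is the entire content of the lemma, and your proposal does not prove it. The only concrete tool you offer there --- clearing denominators to $v(x^k-y^k)=w\,x^ky^k$ and invoking a divisor/$\gcd$ bound --- counts representations of a fixed nonzero rational as a difference of \emph{two single} reciprocal $k$-th powers, i.e.\ it only serves the case $\nu=2$. For $\nu\ge 3$, after removing the two smallest terms you must bound, uniformly over the $\asymp U^2$ comparable pairs $(a,b)$ (the constraint $a<b\le \nu^{1/k}a$ still allows $\asymp a$ values of $b$ for each $a$), the number of representations of $c=1/a^k-1/b^k\ne 0$ as a difference of two $(\nu-1)$-fold sums of reciprocal $k$-th powers, and you need this to be $\le U^{\nu-2+o(1)}$. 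That is an \emph{inhomogeneous} analogue of the lemma itself, which is not supplied by your inductive hypothesis (the hypothesis concerns only the homogeneous equation), nor does it reduce to it by fixing variables: fixing all but two variables loses control exactly when intermediate sums vanish or collide. So the induction does not close, and the step you flag as ``demanding the most care'' is not a technicality but the whole problem restated.

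For comparison, the paper's proof avoids any case analysis or induction. Clearing denominators in \eqref{eq:Jk} (multiply through by $u_1^k\cdots u_{2\nu}^k$) shows that if a prime $p$ divides exactly one variable $u_i$, then every term of the resulting integer identity except one is divisible by $p$, a contradiction; hence any prime dividing one $u_i$ divides some $u_j$ with $j\ne i$, so the product $u_1\cdots u_{2\nu}\le U^{2\nu}$ is squarefull. Since $[1,W]$ contains $O(W^{1/2})$ squarefull integers, the product takes only $O(U^{\nu})$ values, and by the divisor bound each value has at most $U^{o(1)}$ ordered factorisations into $2\nu$ factors, giving $J_{k,\nu}(U)\le U^{\nu+o(1)}$ in one stroke, uniformly in $\nu$ and for every fixed $k\ge 1$. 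If you want to salvage your plan, you would have to prove the uniform inhomogeneous bound as a separate lemma, and the natural way to do that is precisely a variant of this squarefull/divisor argument --- at which point the induction becomes superfluous.
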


\begin{proof} Clearing the denominators in~\eqref{eq:Jk} we see that if $p\mid u_i$ for some
component $i =1, \ldots, 2\nu$ of a solution, then we also have $p \mid u_j$ for some $j \ne i$.
This means that for any solution to~\eqref{eq:Jk}, the product $u_1\ldots u_{2\nu}$ is squarefull.
Since any interval $[1, W]$ contains $O(W^{1/2})$ squarefull integers, see~\cite{SS}, applying this with
$W =U^{2\nu}$ and then using the classical  bound on the divisor function, see~\cite[Equation~(1.81)]{IwKow}, we obtain the result.
\end{proof}

Now repeating the argument of the proof of~\cite[Lemma~2.3]{FouShp} and using
Lemma~\ref{lem:Recipr Eq}  in the appropriate place,  we obtain:

\begin{lemma}
\label{lem:Ikl Aver}   If $k>1$ is a fixed integer,
then, for any  fixed positive integer $\nu$ and sufficiently large real $1 \le U \le Q$,
we have
$$
\frac{1}{Q}\sum_{Q \le q \le 2Q} I_{k,\nu,q}^*(U) \le \(U^{2\nu} Q^{-1} + U^\nu\)Q^{o(1)}.
$$
\end{lemma}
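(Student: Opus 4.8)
The plan is to use the standard device for bounding a congruence count on average over the modulus: one replaces the congruence modulo $q$ by the corresponding equation over $\Z$ (which contributes for \emph{every} $q$) plus a remainder controlled by the divisor function. Recall that $I^*_{k,\nu,q}(U)=I_{k,-1,\nu,q}(U)$ counts tuples $1\le u_1,\ldots,u_{2\nu}\le U$, necessarily with $\gcd(u_i,q)=1$, satisfying
\[
\sum_{i=1}^{\nu} u_i^{-k}\equiv \sum_{i=\nu+1}^{2\nu} u_i^{-k}\pmod q .
\]
First I would clear denominators. Setting $P=(u_1\cdots u_{2\nu})^k$, which is coprime to $q$, multiplication by $P$ turns the congruence into $E\equiv 0\pmod q$, where
\[
E=\sum_{i=1}^{\nu}\prod_{j\ne i}u_j^{k}-\sum_{i=\nu+1}^{2\nu}\prod_{j\ne i}u_j^{k}\in\Z
\]
is an integer with $|E|\le 2\nu\,U^{k(2\nu-1)}$, that is, a fixed power of $U$. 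Since $P$ is invertible modulo $q$, the original congruence is equivalent to $q\mid E$.

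Next I would split the tuples according to whether $E=0$ or $E\ne 0$. Dividing the relation $E=0$ by $P$ shows that it is equivalent to the exact reciprocal equation $\sum_{i\le\nu}u_i^{-k}=\sum_{i>\nu}u_i^{-k}$ over $\Q$; hence the number of tuples with $E=0$ is at most $J_{k,\nu}(U)\le U^{\nu+o(1)}$ by Lemma~\ref{lem:Recipr Eq}, and crucially this bound is uniform in $q$. Summing over the $O(Q)$ integers $q\in[Q,2Q]$, these tuples contribute at most $Q\,U^{\nu+o(1)}$ to $\sum_{Q\le q\le 2Q}I^*_{k,\nu,q}(U)$.

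For the tuples with $E\ne0$ I would interchange the order of summation. A fixed such tuple is counted for a given $q$ only when $q\mid E$, and the number of $q\in[Q,2Q]$ dividing the fixed nonzero integer $E$ is at most $\tau(|E|)\le|E|^{o(1)}=U^{o(1)}$, since $|E|$ is a bounded power of $U$. As there are at most $U^{2\nu}$ tuples, their total contribution is at most $U^{2\nu+o(1)}$. Combining the two cases, dividing by $Q$, and using $U\le Q$ to absorb $U^{o(1)}$ into $Q^{o(1)}$, gives $\bigl(U^{\nu}+U^{2\nu}Q^{-1}\bigr)Q^{o(1)}$, as required.

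The clearing of denominators and the divisor-function estimate are routine; the genuinely substantive ingredient is Lemma~\ref{lem:Recipr Eq}, whose savings to $U^{\nu+o(1)}$ (rather than the pointwise bound $U^{2\nu^2/(\nu+1)}$ of Lemma~\ref{lem:Ikl}) is exactly what produces the sharper $U^{\nu}$ term on average. The only points that need care are the exact correspondence between $E=0$ and the $\Q$-equation counted by $J_{k,\nu}(U)$, and the observation that $|E|\le U^{O(1)}$ so that the divisor bound costs only $U^{o(1)}$; neither presents a serious obstacle.
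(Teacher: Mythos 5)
Your proposal is correct and follows essentially the same route as the paper, whose proof simply amounts to ``repeating the argument of \cite[Lemma~2.3]{FouShp}'' with Lemma~\ref{lem:Recipr Eq} inserted at the right spot: that argument is precisely your dichotomy, after clearing denominators, between tuples where the integer $E$ vanishes (counted uniformly in $q$ by $J_{k,\nu}(U)\le U^{\nu+o(1)}$) and tuples with $E\ne 0$ (where each contributes to at most $\tau(|E|)\le U^{o(1)}$ moduli $q\in[Q,2Q]$). Your write-up correctly handles the two points needing care, namely the equivalence of $E=0$ with the rational equation and the polynomial bound $|E|\le 2\nu U^{k(2\nu-1)}$ making the divisor bound cost only $U^{o(1)}\le Q^{o(1)}$.
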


Using Lemma~\ref{lem:Ikl Aver}  for every $U_i = e^i$, $ i = 1, \ldots, \rf{\log (2Q)}$,
(where $e=2.7182 \ldots$ is the base of the natural logarithm)
we immediately derive:

\begin{cor}
\label{cor:Ikl Aver}  If $k>1$ is a fixed integer,
then, for any  fixed positive integer $\nu$, real positive $\varepsilon > 0$
and sufficiently large real $Q\ge 1$,  for all but $O(Q^{1-\varepsilon})$
integer $q \in [Q, 2Q]$, we have
$$
 I_{k,\nu,q}^*(U) \le \(U^{2\nu} Q^{-1} + U^\nu\)Q^{\varepsilon+ o(1)}
$$
for every $U \le q$.
\end{cor}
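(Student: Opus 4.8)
The plan is to upgrade the on-average estimate of Lemma~\ref{lem:Ikl Aver} into a pointwise bound holding for almost all $q \in [Q,2Q]$ and, what is more delicate, uniformly in the parameter $U$. The two ingredients are Markov's inequality, which handles a single fixed $U$, and the monotonicity of $U \mapsto I_{k,\nu,q}^*(U)$ together with a sparse $e$-adic net, which upgrades this to all $U$ at once.

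First I would fix one scale $U$. Multiplying Lemma~\ref{lem:Ikl Aver} by $Q$ gives
$$
\sum_{Q \le q \le 2Q} I_{k,\nu,q}^*(U) \le \(U^{2\nu} Q^{-1} + U^\nu\)Q^{1+o(1)}.
$$
Hence, by Markov's inequality, for a suitable threshold of the shape $\(U^{2\nu}Q^{-1} + U^\nu\)Q^{\varepsilon + o(1)}$ the number of $q \in [Q,2Q]$ violating it is at most $Q^{1-\varepsilon}$; here the $Q^{o(1)}$ slack in the threshold is precisely what one needs to absorb the $Q^{o(1)}$ coming from Lemma~\ref{lem:Ikl Aver} while keeping the exceptional set cleanly of size $Q^{1-\varepsilon}$.

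Next I would remove the dependence on the fixed $U$, which is the one genuinely delicate point: there are $\asymp Q$ integer values of $U \le q$, so a naive union bound over all of them is useless. This is circumvented by applying the previous step only to the net $U_i = e^i$, $i = 1, \ldots, \rf{\log (2Q)}$, which has just $O(\log Q) = Q^{o(1)}$ points yet reaches every scale up to $2Q \ge q$. Discarding the union of the corresponding exceptional sets removes only $O\(Q^{1-\varepsilon}\log Q\) = O\(Q^{1-\varepsilon}\)$ values of $q$ (the logarithm again being swallowed by the $Q^{o(1)}$ slack). For any surviving $q$ and any $U \le q$ I would then pick the index $i$ with $e^{i-1} < U \le e^i = U_i$; since enlarging the range $[1,U]$ only creates solutions, $I_{k,\nu,q}^*(U) \le I_{k,\nu,q}^*(U_i)$, while $U_i < eU$ makes $U_i^{2\nu} \ll U^{2\nu}$ and $U_i^\nu \ll U^\nu$ with constants depending only on $\nu$. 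Combining these,
$$
I_{k,\nu,q}^*(U) \le I_{k,\nu,q}^*(U_i) \le \(U_i^{2\nu}Q^{-1} + U_i^\nu\)Q^{\varepsilon + o(1)} \ll \(U^{2\nu}Q^{-1} + U^\nu\)Q^{\varepsilon + o(1)},
$$
which is the claimed bound, valid simultaneously for all $U \le q$. The main obstacle, as indicated, is this uniformity step; everything rests on the monotonicity of the counting function in $U$ so that the coarse $e$-adic net suffices, keeping the total exceptional set within $O(Q^{1-\varepsilon})$.
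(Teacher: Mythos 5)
Your proposal is correct and is essentially the paper's own argument: the paper derives the corollary by applying Lemma~\ref{lem:Ikl Aver} at the $e$-adic net $U_i=e^i$, $i=1,\ldots,\rf{\log(2Q)}$, exactly as you do, with Markov's inequality and the monotonicity of $U \mapsto I_{k,\nu,q}^*(U)$ supplying the (unstated) details. Your write-up merely makes explicit what the paper leaves implicit, including the point that the $\log Q$ from the union bound must be absorbed into the $Q^{\varepsilon+o(1)}$ threshold rather than into the exceptional-set count.
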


%
%
%


\section{Proofs main results}
\subsection{Proof of Theorem~\ref{thm:SAJq}}


Changing the order of summation,  we obtain
$$
 \SAJq  = \sum_{x \in\Z_q^*} \sum_{m\in \cI} \alpha_m \eq(mx^k) \sum_{n \in \cJ} \eq(nx^\ell).
$$

Recalling  Lemma~\ref{lem:lin sums}, we obtain
$$
 \SAJq   = \sum_{m\in \Z_q} \sum_{x \in\Z_q^*}\alpha_m
 \gamma_x  \eq(m x^k),
$$
where
$$
|\gamma_x| \le  \min\left\{N,  \frac{q}{\flq{x^\ell }}\right \}.
$$

We define $I = \rf{\log q}$ and write
 \begin{equation}
\label{eq:BAIJ S0i}
 \SAJq \ll  |\Sigma_0|+ \sum_{i=1}^I |\Sigma_i|,
\end{equation}
where
 \begin{equation*}
\begin{split}
\Sigma_0 &   = \sum_{m\in \Z_q} \sum_{\substack{x \in\Z_q^*\\ \flq{x^\ell} \le q/N}}  \alpha_m
 \gamma_x  \eq(m x^k), \\
\Sigma_i & = \sum_{m\in \Z_q}  \sum_{\substack{x \in\Z_q^*\\  e^{i+1} q/N\ge \flq{x^\ell} > e^iq/N}}
\alpha_m   \gamma_x  \eq(m x^k), \qquad i = 1, \ldots, I.
\end{split}
\end{equation*}

 Now using  Lemmas~\ref{lem:DoubleSum} and~\ref{lem:Monom}, we have
\begin{equation}
\label{eq:S0}
|\Sigma_0| \le  \|\cA\|_2 N\sqrt{ (q/N) q^{1+o(1)} }\le   \|\cA\|_2 N^{1/2}q^{1+o(1)}.
\end{equation}
Also, for $i = 1, \ldots, I$, using that if  $e^{i+1} q/N\ge \flq{x^\ell} > e^iq/N$
then $\gamma_x \ll Ne^{-i}$, hence, again by Lemmas~\ref{lem:DoubleSum} and~\ref{lem:Monom},
we obtain
 \begin{equation*}
\begin{split}
\Sigma_i & =  \sum_{m\in \Z_q} \sum_{\substack{x \in\Z_q^*\\  e^{i+1} q/N\ge \flq{x^k} > e^iq/N}}
\alpha_m   \gamma_x  \eq(m x^\ell) \\
& \le  \|\cA\|_2 (q^{o(1)} N^2e^{-2i} e^{i} q/N)^{1/2} q^{1/2} = e^{-i/2}  \|\cA\|_2 N^{1/2} q^{1+o(1)} .
\end{split}
\end{equation*}
Therefore,
\begin{equation}
\label{eq:Si}
\sum_{i=1}^I |\Sigma_i| \le   \|\cA\|_2 N^{1/2} q^{1+o(1)}  \sum_{i=1}^I e^{-i/2}  \le  \|\cA\|_2 N^{1/2} q^{1+o(1)}.
\end{equation}
Combining~\eqref{eq:S0} and~\eqref{eq:Si},
we obtain the first bound.

For the second bound we turn to use the method of ~\cite{Shp2}. For a fixed integer $\nu\geq 2$,
using the H\"{o}lder inequality, we obtain
\begin{equation}
\begin{split}
\label{eq:S0W0}
|\Sigma_{0}| & \leq \(\sum_{m\in\mathbb{Z}_{q}}|\alpha_{m}|\)^{1-1/\nu}   \(\sum_{m\in\mathbb{Z}_{q}}|\alpha_{m}|^{2}\)^{1/2\nu} W_0\\
& = \|\cA\ \|_{1}^{1-1/\nu} \|\cA\ \|_{2}^{1/\nu}  W_0,
\end{split}
\end{equation}
where
$$
 W_0 =  \(\sum_{m\in\mathbb{Z}_{q}}\left|\sum_{\substack{x \in\Z_q^*\\ \flq{x^\ell} \le q/N}}
\gamma_x  \eq(m x^k)\right|^{2\nu}\)^{1/2\nu}.
$$

Opening up the inner sum, changing the order of summation and using the orthogonality of exponential functions, we obtain
 \begin{equation*}
\begin{split}
W_0
& = \sum_{m\in\mathbb{Z}_{q}}
\mathop{\mathop{\sum\cdot\cdot\cdot\sum}_{x_{1},\ldots,x_{2\nu}\in\mathbb{Z}_{q}^{*}}}_{\flq{x_{i}^\ell} \le q/N, i=1,\ldots,2\nu}\prod_{j=1}^{\nu}\gamma_{x_{j}}\overline{\gamma_{x_{\nu+j}}} \eq \(m \sum_{j=1}^{\nu}(x_{j}^k-x_{\nu+j}^{k})\)\\
& = q \mathop{\mathop{\sum\cdot\cdot\cdot\sum}_{\flq{x_{i}^\ell} \le q/N, i=1,\ldots,2\nu}}_{x_{1}^{k}+\ldots+x_{\nu}^{k}\equiv x_{\nu+1}^k+\ldots+x_{2\nu}^{k}\pmod q}\prod_{j=1}^{\nu}\gamma_{x_{j}}\overline{\gamma_{x_{\nu+j}}}\\
& \leq N^{2\nu} q \mathop{\mathop{\sum\cdot\cdot\cdot\sum}_{\flq{x_{i}^\ell} \le q/N, i=1,\ldots,2\nu}}_{x_{1}^{k}+\ldots+x_{\nu}^{k}\equiv x_{\nu+1}^k+\ldots+x_{2\nu}^{k}\pmod q}1.
\end{split}
\end{equation*}

Let $u_{i}=\flq{x_{i}^\ell}$, $v_{i}=x_{i}^{k}$, then we have
 \begin{equation*}
\begin{split}
v_{1}& +\ldots+v_{\nu}\equiv v_{\nu+1}+ \ldots+v_{2\nu}\pmod q,\\
&u_{i}^{k}\equiv \pm v_{i}^{l}\pmod q, \qquad 0<u_{i}\leq q/N.
\end{split}
\end{equation*}
Applying  Lemma~\ref{lem:Ikl}, we have
$$
W_0 \leq N^{2\nu}q^{1+o(1)} \( \(\frac{q}{N}\)^{2\nu}q^{-1}+ \(\frac{q}{N}\)^{2\nu^{2}/(\nu+1)}\).
$$
Then, we see from~\eqref{eq:S0W0}
$$|\Sigma_{0}|\leq \|\cA\ \|_{1}^{1-1/\nu} \|\cA\ \|_{2}^{1/\nu}\(q+q^{(2\nu^2 + \nu +1)/2 \nu (\nu+1)}
  N^{1/(\nu+1)}\)q^{o(1)}.$$
Similarly, we also obtain
$$|\Sigma_{i}|\leq \|\cA\ \|_{1}^{1-1/\nu} \|\cA\ \|_{2}^{1/\nu}\(q+q^{(2\nu^2 + \nu +1)/2 \nu (\nu+1)}
  N^{1/(\nu+1)}e^{-i/(\nu+1)}\)q^{o(1)},$$
and the result now follows from~\eqref{eq:BAIJ S0i}.

\subsection{Proof of Theorem~\ref{thm:sABJp}}

By the Cauchy inequality we have
\begin{equation*}
\begin{split}
|\sABJp|^2 & \le \|\cA\|_2  \|\cB\|_2  \sum_{n \in \cJ}
\sum_{m\in \Z_p} \left | \sum_{x\in \Z_p^*} \ep\(mx^k +nx^{-1}\)\right|^2\\
 &=  \|\cA\|_2  \|\cB\|_2  \sum_{n \in \cJ}
\sum_{m\in \Z_p} \left | \sum_{x\in \Z_p^*} \ep\(mx^{-k} +nx\)\right|^2\\
&= \|\cA\|_2  \|\cB\|_2  \sum_{n \in \cJ}
\sum_{m\in \Z_p}  \\
& \qquad \qquad \qquad  \sum_{x,y\in \Z_p^*} \ep\(m(x^{-k}-y^{-k}) +n(x -y)\).
\end{split}
\end{equation*}
Now, writing $y = x+z$ we obtain
\begin{equation*}
\begin{split}
&|\sABJp|^2 \\
&\qquad  \le  \|\cA\|_2  \|\cB\|_2  \sum_{n \in \cJ}
\sum_{m\in \Z_p}  \sum_{x\in \Z_p^*}  \sum_{z\in \Z_p^*-x} \ep\(m(x^{-k}-(x+z)^{-k}) -n z\),
\end{split}
\end{equation*}
where $\Z_p^*-x = \{z \in \Z_p~:~z+x \in \Z_p^*\}$.
Changing the order of summation and applying  Lemma~\ref{lem:lin sums}, we obtain
\begin{equation}
\label{eq:T1}
\begin{split}
\sABJp^2 & \ll \|\cA\|_2  \|\cB\|_2
  \sum_{x\in \Z_p^*} \sum_{m\in \Z_p}   \left| \sum_{z\in\Z_p^*-x}  \eta_z \ep\(m(x+z)^{-k})\)\right|,
\end{split}
\end{equation}
where
$$
|\eta_z| \le  \min\left\{N,  \frac{p}{\flp{z}}\right \}.
$$

For every fixed $x$, to estimate
$$
W(x) =    \sum_{m\in \Z_p}   \left| \sum_{z\in\Z_p^*-x}  \eta_z \ep\(m(x+z)^{-k})\)\right|,
$$
we now set $I=\rf{\log (p/2)}$ and define $2(I+1)$ sets
\begin{equation*}
\begin{split}
&\mathcal{Z}_0^{\pm}=\left\{z\in \Z~:~  p/N\ge\pm z>0\right\},\\
&\mathcal{Z}_i^{\pm}=\left\{z\in \Z~:~ \min\{p/2, e^{i}p/N\}\ge \pm z>e^{i-1}p/N\right\},\quad i=1, \ldots, I.
\end{split}
\end{equation*}
Then
\begin{equation}
\label{eq:T2}
W(x)  \ll \sum_{i=0}^I |T_{i}^\pm(x)|,
\end{equation}
where
$$
T_{i}^\pm(x) =\sum_{m\in \Z_p}   \left| \sum_{z\in (\Z_p^*-x)\cap\mathcal{Z}_i^{\pm}}  \eta_z \ep\(m(x+z)^{-k})\)\right|,\qquad i=0, \ldots, I.
$$

For a fixed positive integer $\nu$, using again the H\"{o}lder inequality, we obtain
\begin{equation*}
\begin{split}
|T_{i}(x)^\pm|^{2\nu} & \leq p^{2\nu-1}\sum_{m\in\mathbb{Z}_{p}}\left|\sum_{\substack{z \in (\Z_p^*-x)\cap\mathcal{Z}_i^{\pm}}}
\eta_z  \ep(m (x+z)^{-k})\right|^{2\nu}\\
& = p^{2\nu-1}\mathop{\mathop{\sum\cdot\cdot\cdot\sum}_{z_{1},\ldots,z_{2\nu}\in (\Z_p^*-x)\cap\mathcal{Z}_i^{\pm}}}
\prod_{j=1}^{\nu}\eta_{z_{j}}\overline{\eta_{z_{\nu+j}}} \\
&\qquad\sum_{m\in\mathbb{Z}_{p}}\ep \(m \sum_{j=1}^{\nu}((x+z_{j})^{-k}-(x+z_{\nu+j})^{-k})\).
\end{split}
\end{equation*}
So, denoting by $\Omega_i^\pm(x)$ the following set
$$
\Omega_i^\pm(x)
=\left\{\(z_{1},\ldots,z_{2\nu}\) \in \(\cZ_i^{\pm}\)^{2\nu}
~:~\sum_{j=1}^{2\nu} \frac{(-1)^j }{(x+z_{j})^{k}} \equiv 0\pmod p
\right\}
$$
we can now see that
\begin{equation*}
\begin{split}
|T_{i}^\pm(x) |^{2\nu} & \leq  p^{2\nu}\sum_{(z_{1},\ldots,z_{2\nu})\in\Omega_i^\pm(x)}\prod_{j=1}^{\nu}\eta_{z_{j}}\overline{\eta_{z_{\nu+j}}}\\
& \ll (e^{-i}N)^{2\nu} p^{2\nu}  \# \Omega_i^\pm(x),
\end{split}
\end{equation*}
where we have used
$$
|\eta_z| \ll  e^{-i}N, \qquad i=0, \ldots, I.
$$

Applying  Lemma~\ref{lem:Ikl shift}, we have
\begin{equation*}
\begin{split}
|T_{i}^\pm(x) |^{2\nu} &\leq (e^{-i}N)^{2\nu}p^{2\nu+o(1)} \( \(e^i\frac{p}{N}\)^{2\nu}p^{-1}+ \(e^i\frac{p}{N}\)^{2\nu^{2}/(\nu+1)}\)\\
&= \(p^{4\nu-1}+p^{(4\nu^2 + 2\nu )/(\nu+1)}N^{2\nu/(\nu+1)}e^{-2\nu i/(\nu+1)}\)p^{o(1)}.
\end{split}
\end{equation*}
Then
\begin{equation}
\label{eq:T3}
|T_{i}^\pm(x) |\leq  \(p^{(4\nu-1)/(2\nu)}+p^{(2\nu +1)/(\nu+1)}N^{1/(\nu+1)}e^{-i/(\nu+1)}\)p^{o(1)}.
\end{equation}
The result now follows from~\eqref{eq:T1},  \eqref{eq:T2} and \eqref{eq:T3}.

\subsection{Proof of Theorem~\ref{thm:SAJq Aver}}

We consider only the integers  $q \in [Q, 2Q]$ for which the bound of  Corollary~\ref{cor:Ikl Aver} holds. Now,  proceeding as in the proof of the second bound in Theorem~\ref{thm:SAJq} but use Corollary~\ref{cor:Ikl Aver} instead of Lemma~\ref{lem:Ikl}.

\section{Comparison with previous results}
\label{sec:comp}

We note that for a prime $q=p$,
in our notation for  the functions $K_1(t)$ and $K_2(t)$ from~\cite{Nun1}
we have
$$
 K_1(mn) =  p^{-1/2} S_{-2,1,p}(ab^{2} m^2,n)=  p^{-1/2}  S_{2,-1,p}(ab^{2} m^2,n) $$
and
$$
 K_2(mn^2) =  p^{-1/2} S_{-2,1,p}(ab^{2} m,n) =
 p^{-1/2} S_{2,-1,p}(ab^{2} m,n).
$$
Recalling the definition~\eqref{eq:Special SAMN}, we now see that
the results of Nunes~\cite{Nun1}
can be written as
\begin{equation}
\label{eq:Nun T.1.2}
\cS_{2,p}^*(\cA;\cM_1,\cJ) \le
\sqrt{ \|\cA\|_{1}  \|\cA \|_{2}} p^{3/4+o(1)} M^{1/16} N^{5/8}
\end{equation}
provided that $1 \le M \le N^2$ and $MN^2 \le p^2$
and also
\begin{equation}
\label{eq:Nun T.1.3}
\cS_{2,p}^*(\cA; \cM_2,\cJ) \le
\sqrt{ \|\cA\|_{1}  \|\cA \|_{2}} p^{3/4+o(1)} M^{1/12} N^{7/12}
\end{equation}
provided that $1 \le M \le N^2$ and $MN\le p^{3/2}$,
where
$$\cM_1 =\{\alpha j~:~j =1, \ldots, M \} \mand
\cM_2 =  \{\alpha j^{2}~:~j =1, \ldots, M \}
$$
(with some $\alpha \in \F_p^*$)
and $\cJ$ is an interval of length $N<p$.
Using  Theorem~\ref{thm:SAJq} with $\nu = 2$ (and recalling that its bound does not depend
on the support $\cM$ of the weights $\cA$, see~\eqref{eq:Simple SAJ}), we obtain
$$
\cS_{2,p}^*(\cA;\cM_j,\cJ) \le
\sqrt{ \|\cA\|_{1}  \|\cA \|_{2}} \(p+ p^{11/12} N^{1/3}\) p^{o(1)}, \qquad j =1,2.
$$
This bound improves~\eqref{eq:Nun T.1.2} for
$$
MN^{10} \ge p^{4+\varepsilon} \mand M^3N^{14} \ge p^{8+\varepsilon}
$$
 and
 improves~\eqref{eq:Nun T.1.3} for
$$
MN^{7} \ge p^{3+\varepsilon}  \mand MN^{3} \ge p^{2+\varepsilon}
$$
with some fixed $\varepsilon > 0$.
In particular, if $M$ and $N$ are of similar sizes, that is, $N = M^{1+o(1)}$,
this happens for $M \ge p^{8/17 + \varepsilon}$ and $M \ge p^{1/2 + \varepsilon}$,
respectively.

We further note that for applications to smooth
numbers in arithmetic progressions only the bound~\eqref{eq:Nun T.1.2}
matters and only in the case of constant weights and thus it has to be
compared with that of  Corollary~\ref{cor:SIJq}
(it is easy to see that for
$\ell = 1$ it can be extended to the set $\cM_1 = \alpha \cI$).
 In particular, in this case the bound~\eqref{eq:Simple Bound}
is better when
$$
M^{13} N^{-2} \ge p^{4+\varepsilon}  \mand M^{23} N^{-6} \ge p^{8+\varepsilon},
$$
or similarly with $M$ and $N$ can be interchanged,
see also~\eqref{eq:Simple Bound Range} for the range when it is nontrivial.

In particular, in the critical for applications regime, when $N = M^{1+o(1)}$,
the bound~\eqref{eq:Simple Bound} is both better and nontrivial for
$ M\ge p^{8/17+\varepsilon}$.  However, the potential improvement of~\cite{Nun1},
which is implied by our bounds seems to be of the same strength as in the
follow up work of Nunes~\cite{Nun2}, where this is achieved via a different approach.

\section*{Acknowledgement}

The authors are grateful to Ramon Nunes for very useful discussions,
in particular for the information about his results in~\cite{Nun2}
and their comparison with potential improvements coming from our new bounds.

The first and the third authors gratefully acknowledge the support,   hospitality
and   excellent conditions of the School of Mathematics and Statistics of UNSW during their visit.

This work was supported by NSFC Grant 11401329 (for K.~Liu), by ARC Grant~DP140100118
(for I.~E.~Shparlinski) and by the Natural Science Foundation of Shaanxi Province of China Grant~2016JM1017
(for T.~P.~Zhang).

\end{document}